\documentclass[12pt,reqno]{amsart}
\usepackage[headings]{fullpage}
\usepackage{amssymb,amsmath,mathtools,bbm,tikz,tikz-cd,color,relsize,multirow}
\usepackage{stmaryrd}
\usepackage[all,cmtip]{xy}
\usepackage{url}

\usepackage{float}
\usetikzlibrary{positioning}
\usetikzlibrary{calc}
\usetikzlibrary{decorations.markings}
\usetikzlibrary{arrows}
\usetikzlibrary{calc}
\usetikzlibrary{decorations.markings}
\tikzstyle{hvector}=[inner sep=2pt,draw=blue!50,fill=blue!10,thick]
\tikzstyle{unit}=[inner sep=2pt,shape=circle, draw]
\tikzstyle{counit}=[inner sep=2pt,shape=circle, draw,fill=gray]
\tikzstyle{antipode}=[inner sep=2pt,shape=rectangle, draw]
\tikzstyle{cocycle}=[inner sep=2pt,shape=circle, draw]
\tikzstyle{twistedm}=[inner sep=2pt,shape=circle, fill=gray]
\tikzstyle{autom}=[inner sep=2pt,shape=circle, draw]
\tikzstyle{coact}=[inner sep=2pt,shape=circle, fill=black]

\tikzset{> /.tip = {Stealth[round,length=6pt]}}
\tikzstyle over=[preaction={draw,line width=5pt,white}]

\allowdisplaybreaks

\usepackage[bookmarks=true,%
    colorlinks=true,%
    linkcolor=blue,%
    citecolor=blue,%
    filecolor=blue,%
    menucolor=blue,%
    urlcolor=blue,%
    breaklinks=true]{hyperref}
\usepackage{slashed}    
\usepackage{verbatim}
\usepackage[normalem]{ulem}  
\usepackage{diagbox}

\newtheorem{theorem}{Theorem}[section]
\theoremstyle{definition}
\newtheorem{proposition}[theorem]{Proposition}
\newtheorem{lemma}[theorem]{Lemma}

\newtheorem{remark}[theorem]{Remark}
\newtheorem{corollary}[theorem]{Corollary}

\newtheorem{question}[theorem]{Question}

\def\red#1{\textcolor{red}{[#1]}}

\def\BZ{\mathbbm Z}
\def\BQ{\mathbbm Q}

\def\BC{\mathbbm C}

\def\a{\alpha}
\def\b{\beta}

\def\d{\delta}
\def\e{\epsilon}
\def\ve{\varepsilon}

\def\be{\begin{equation}}
\def\ee{\end{equation}}

\def\id{\mathrm{id}}

\def\c{\mathbbm{c}}

\def\sl21{\mathfrak{sl}(2|1)}

\def\fsl{\mathfrak{sl}}
\def\dim{\mathrm{dim}}

\newcommand{\lcoev}{\stackrel{\longleftarrow}{\operatorname{coev}}}
\newcommand{\lev}{\stackrel{\longleftarrow}{\operatorname{ev}}}
\newcommand{\rev}{\stackrel{\longrightarrow}{\operatorname{ev}}}

\def\phip{\varphi_{\ov{1}}^+(\mathfrak{g})}
\def\osp{\mathfrak{osp}}

\def\Q{\mathbb{Q}}
\def\C{\mathbb{C}}
\def\N{\mathbb{N}}
\def\Z{\mathbb{Z}}
\def\R{\mathbb{R}}
\def\sb{\subset}

\def\ot{\otimes}
\def\t{\times}

\def\c{\gamma}
\def\a{\alpha}
\def\b{\beta}
\def\d{\delta}
\def\e{\epsilon}
\def\De{\Delta}

\def\ov{\overline}

\def\wt{\widetilde}

\def\gg{\mathfrak{g}}
\def\id{\text{id}}
\def\sl{\mathfrak{sl}}

\def\rev{\overrightarrow{\text{ev}}}

\def\lev{\overleftarrow{\text{ev}}}
\def\lcoev{\overleftarrow{\text{coev}}}
\def\CC{\mathcal{C}}
\def\gg{\mathfrak{g}}
\def\coev{\text{coev}}

\def\TS{T_{\Sigma}}
\def\nn{\bold{n}}

\def\Vnna{V(\nn,a)}
\def\osp{\mathfrak{osp}}
\def\qi{q_i}

\def\tcb{\textcolor{blue}}
\def\Deev{\Phi^+_{\ov{0}}}
\def\Deodd{\Phi^+_{\ov{1}}}
\def\Uqg{U_q(\gg)}
\def\nn{\boldsymbol{n}}
\def\BB{\mathcal{B}}
\def\FF{\overline{F}}
\def\rhoev{\rho_{\overline{0}}}
\def\rhoodd{\rho_{\overline{1}}}
\def\o{\omega}
\def\S{\Sigma}
\def\Bnna{\mathcal{B}_{\nn,a}}
\def\HH{\mathcal{H}}
\def\CC{\mathcal{C}}
\def\Uqp{U_q(\mathfrak{p}^+)}
\def\Cnn{C_{\nn}}
\def\psl{\mathfrak{psl}}
\def\CC{\mathcal{C}_q^{w}(\gg)}
\def\ov{\overline}
\def\ggodd{\gg_{\ov{1}}}
\def\ggev{\gg_{\ov{0}}}
\def\sp{\mathfrak{sp}}
\def\UQ{U_{\mathbb{Q}}}
\def\wt{\widetilde}

\begin{document}

\title[Genus bounds for knot polynomials of type I Lie superalgebras]{
Genus bounds for knot polynomials of Lie superalgebras}

\author[S. Garoufalidis]{Stavros Garoufalidis}
\address{
  International Center for Mathematics, Department of Mathematics \\
  Southern University of Science and Technology \\
  Shenzhen, China \newline
  {\tt \url{http://people.mpim-bonn.mpg.de/stavros}}}
\email{stavros@mpim-bonn.mpg.de}

\author[D. López Neumann]{Daniel López Neumann}
\address{Universidad de Concepción, Concepción, Chile}
\email{dlopezn@udec.cl}

\thanks{
  {\em Key words and phrases:}
  Knots, Links, Lie superalgebras, Lie superalgebras of type I, 
  $\mathfrak{sl}(2|1)$, $\mathfrak{sl}(m|n)$, $\mathfrak{osp}(2|2n)$,
  knot polynomials, Seifert genus, Melvin-Morton-Rozansky Conjecture.
}

\date{8 July 2026}

\begin{abstract}
  Knot polynomials colored by typical representations of Lie superalgebras of type
  I (except $\psl(n|n)$) have two variables $q$ and $t$, the latter corresponding 
  to the complex-valued weight of the distinguished odd root. We prove that for every
  typical representation of a Lie superalgebra of type I, the $t$-degree of the 
  knot polynomial is at most the number of odd roots times the genus of the knot.
  A complimentary bound being at least the number of odd roots
  times degree of the Alexander polynomial can be obtained from a specialization
  at $q=1$. These two bounds become equalities when the Alexander polynomial detects
  the genus of the knot, as is the case for alternating knots and fibered knots.
\end{abstract}

\maketitle

{\footnotesize
\tableofcontents
}


\section{Introduction}
\label{sec.intro}

\subsection{Motivation}

The discovery of the Jones polynomial~\cite{Jones} gave rise to a plethora of
knot and link polynomials whose components were colored by representations of
Lie algebras and Lie superalgebras~\cite{RT:ribbon,Tu:YB}. Whereas Lie algebras
are a classic subject in mathematics, Lie superalgebras are less so. Yet
they play an essential role in supersymmetry and in geometry (see e.g., ~\cite{Manin}),
and their representation theory has been studied
extensively by Kac, Serganova and collaborators~\cite{Kac:superalgebra,Serganova}.
They also play a role in categorification~\cite{BS,Khovanov}, as well as in
non-semisimple topological quantum field theories in 2+1 dimensions~\cite{CGP}. 
\medskip

In this paper we highlight a connection between the quantum
knot polynomials and 3-dimensional topology witnessed by Lie superalgebras, and 
sketch a proof from first principles. More precisely, the
connection involves quantum knot invariants colored by representations of Lie
superalgebras, and a 3-dimensional topology invariant, the genus of the knot,
i.e., the smallest genus of an oriented spanning (Seifert) surface. 
This connection stems from the fact that some Lie superalgebras, called of type I 
\cite{Kac:reps}, have ``continuous'' families of simple modules, a fact that gives 
rise to 2-variable knot polynomials, where one variable is the usual $q$-variable, 
and the second one $t$ comes from the continuous representation-parameter. 
\medskip

A motivating example is the Links--Gould 2-variable polynomial invariant of 
knots~\cite{Links-Gould} associated with the 4-dimensional representation 
$V(0,a)$ of the Lie superalgebra $\fsl(2|1)$ (where $a \in \BC$ is a 
generic complex number, i.e., not an integer).
It was shown by Kohli--Tahar that this polynomial satisfies the genus
bounds~\cite{Kohli-Tahar}
\be
\label{gbound}
\deg_t J^{\fsl(2|1)}_{K,V(0,a)}(q,t) \leq 4 \, \text{genus}(K), \qquad (t=q^{2a})
\ee
for every knot $K$ in 3-space. Using a cabling formula for the Links--Gould 
polynomial and further arguments, it was shown in~\cite{Ga:colored} that the 
bound~\eqref{gbound} extends and holds when $V(0,a)$ is replaced by any 
$4(n+1)$-dimensional representation $V(n,a)$ of $\fsl(2|1)$
\be
\label{gkbound}
\deg_t J^{\fsl(2|1)}_{K,V(n,a)}(q,t) \leq 4 \, \text{genus}(K), 
\qquad (n \geq 0) \,.
\ee
This bound covers the set of all (typical) simple modules of $\fsl(2|1)$
and, curiously, the right hand-side of the above inequality does not depend
on the representation. 

\subsection{Our results} 

An explanation of why this happens, and what such a statement might be for all 
simple Lie superalgebras, led us to the notion of type I Lie superalgebra and typical 
representations, and to the importance of the set of odd positive roots. 
In Kac's classification, these superalgebras are $\sl(m|n), \psl(n|n)$ 
and $\osp(2|2n)$ and, except for $\psl(n|n)$, they are precisely the ones for which the 
even part has the form $\gg_{\ov{0}}=\gg_0 \oplus \C$ where $\gg_0$ is a semisimple Lie 
algebra~\cite{Kac:superalgebra}. A typical module $V=\Vnna$ of $\gg$ is determined by 
a dominant weight $\nn$ of $\gg_0$ plus a continuous parameter $a\in \C$ corresponding
to $\C$~\cite{Kac:reps}. Let $\phip$ denote the number of odd positive roots of $\gg$. 
Set $t=q^{2a}$ and consider the knot invariant
$J_{K,V}^{\gg}(q,t)\in \Z[q^{\pm 1}, t^{\pm 1}]$.


\begin{theorem}
\label{thm.1}
Fix a Lie superalgebra $\gg$ of type I and a typical representation $V=V(\nn,a)$
of $\gg$. Then, for any knot $K$ in 3-space we have 
\be
\label{grbound}
\deg_t J^{\gg}_{K,V}(q,t) \leq 2\phip \,\cdot \text{genus}(K) \,.
\ee
\end{theorem}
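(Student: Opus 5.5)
The plan follows the Kohli--Tahar strategy for the Links--Gould invariant, adapted to an arbitrary type I superalgebra $\gg$. Present $K$ as the closure of a diagram in which a minimal genus Seifert surface $\Sigma$ is visible. Concretely, $K$ is the plat (or band) closure of a tangle: a disk with $2g$ bands attached, $g=\text{genus}(K)$, and the bands may be knotted and twisted among each other. Cutting along the cores of the bands writes $J^{\gg}_{K,V}$ as a contraction of an endomorphism $T$ of $(V\otimes V^*)^{\otimes 2g}$ (or a partial trace thereof). Each band pair $V\otimes V^*$ contributes through its coevaluation/evaluation, and the knotting of the bands only involves braidings of $V\otimes V^*$ with itself.

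The key algebraic step is to control the $t$-dependence. In a PBW basis, $V=V(\nn,a)$ is generated from a highest weight vector by the exterior algebra on the odd negative root vectors. As a $U_q(\gg_{\ov 0})$-module it is therefore $\Lambda(\gg_{-1})\otimes L(\nn)$ twisted by the character $a$ of the central $\C$, so the weight spaces are graded by the odd degree $k\in\{0,\dots,\phip\}$. On $V\otimes V^*$ the central parameter $a$ cancels, and $V\otimes V^*$ is a module over $U_q(\gg)$ whose matrix coefficients are independent of $t$ once we use the basis adapted to $V\otimes V^*$. The $t$-dependence of the R-matrix $R=q^{H\otimes H}\Theta$ then enters only through the Cartan part: $q^{H\otimes H}$ acting on a tensor of weights $\lambda,\mu$ gives $t^{\pm(\text{odd degree})}$ times $t$-free factors. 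The quasi-R-matrix $\Theta$ is polynomial in $E,F$ with $t$-free coefficients, except for the odd generators, whose action involves $[a+\dots]$-type coefficients, i.e.\ Laurent polynomials of bounded $t$-degree.

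The heart of the argument is the following bounding step. Each Seifert band, seen as an element of $\text{End}(V\otimes V^*)$ contracted via ribbon elements and twists, contributes a $t$-degree equal to the spread of the odd degree, namely $\phip$. Normalizing by $t^{-\phip/2}$ per band gives a symmetric range $[-\phip/2,\phip/2]$. The $2g$ bands then give total $t$-span at most $2g\cdot\phip$, which is exactly \eqref{grbound} after fixing the symmetric normalization of $J$.

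To make this rigorous I would show that the operator assigned to the tangle is a Laurent polynomial in $t$ whose degree is bounded by the sum, over bands, of the maximal odd-degree difference. This uses a filtration of $V\otimes V^*$ by odd degree, preserved up to shifts by the braiding. I expect the main obstacle to be proving that the full braiding of two band-strands ($V\otimes V^*$ past $V\otimes V^*$) introduces no additional $t$-powers. The approach I would try first is to check that $V\otimes V^*$ is $t$-independent as a braided object. One way is to show that $\text{End}(V)$ carries an action of a $t$-free integral form, using the Kac-module structure $V\cong \text{Ind}_{\gg_{\ov 0}\oplus\gg_{+1}}^{\gg}$. Another is to use a Gauss-type decomposition in which the $a$-dependent factors cancel between $V$ and $V^*$.
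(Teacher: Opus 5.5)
Your proposal has a genuine gap. The step you call the heart of the argument, that each band contributes $t$-span at most $\varphi^+$, is asserted rather than proved, and the route you sketch for it cannot work.

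The bands of $\Sigma$ may cross each other arbitrarily often. In the natural basis of $X=V\otimes V^*$ (a PBW basis of $V$ tensored with its dual), the braiding $c_{X,X}$ does introduce $t$-powers, though not where you locate them. All weights of $X$ lie in the root lattice, so the Cartan factor $q^{H\otimes H}$ is $t$-free on $X\otimes X$; it does not produce $t^{\pm(\text{odd degree})}$. The $t$-dependence comes from the odd positive root vectors in $\Theta$, which act on $X$ with coefficients in $\mathbb{Q}(q)+\mathbb{Q}(q)q^{-2a}$.

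Hence your goal that ``the braiding of $X$ past $X$ introduces no additional $t$-powers'' is false. A filtration by odd degree, preserved up to shifts, only yields a bound that grows with the number of crossings, because the $t$-dependence is not governed by the net change of odd degree. For example, $E_mF_m$ preserves weights, yet it sends $v_0\otimes v_0^*$ to $[a]_q\,q^{-a}\,v_0\otimes v_0^*+\cdots$. Passing to some $t$-independent model of $X$, if one exists, only moves the $t$-dependence into the change of basis and into the final contractions, and you do not control those.

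What is missing is the mechanism that makes the bound independent of the diagram. The paper supplies it in two steps.

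First, on $X\otimes X$ the factor $q^{H\otimes H}$ equals the action of an element of $U_{\mathbb{Q}}^{\otimes 2}$. This is a Vandermonde argument, available precisely because the weights of $X$ lie in $L_R$. Consequently all $R$-matrix factors met along the $i$-th band multiply to a single element of $U_{\mathbb{Q}}$. So $F(T_\Sigma,X)$ is given by a universal invariant $Z_{T_\Sigma}\in U_{\mathbb{Q}}^{\otimes 2g}$, a sum of terms $Z_1\otimes\cdots\otimes Z_{2g}$ with arbitrary $Z_i$.

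Second, one needs a bound valid for \emph{every} $Z\in U_{\mathbb{Q}}$. Write $Z=\sum c\,E^JK^AF^I$ in PBW form. Then:
\begin{itemize}
\item $K^AF^I$ and the even $E_\alpha$ act on $X$ with $\mathbb{Q}(q)$-coefficients;
\item each odd $E_\alpha$ contributes degree at most $1$ in $q^{-2a}$;
\item $E_\alpha^2=0$ for odd $\alpha$ caps the number of odd factors in $E^J$ at $\varphi^+$.
\end{itemize}
Hence $Z$ acts by polynomials in $q^{-2a}$ of degree at most $\varphi^+$. It is polynomiality, not merely bounded span, that keeps the sum over $J$ from enlarging the degree.

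Apply this to each of the $2g$ tensor factors. Each pivotal map $\iota_X$ is diagonal with entries in $q^{a\varphi^+}\mathbb{Q}(q)$, and the left evaluations are $t$-free, so all powers of $q^a$ land in $[-2g\varphi^+,2g\varphi^+]$. The key ingredient absent from your plan is the nilpotency of odd root vectors, exploited through the PBW decomposition of an arbitrary element of $U_{\mathbb{Q}}$.
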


Our proof of Theorem~\ref{thm.1}, which follows the lines of \cite{LNV:genus-unrolled}, 
is independent of the proofs of the prior work for $\fsl(2|1)$
(see Equation~\eqref{gbound}), and it is even simpler. 
\medskip

In the above theorem $q$ is generic, that is, not a root of unity.
Note that when $\gg$ is a simple Lie algebra and the quantum parameter $q$ is a root 
of unity, then $U_q(\gg)$ also has families of simple modules depending on continuous
parameters. The corresponding polynomial invariants are generalizations of the
Akutsu--Deguchi--Ohtsuki (in short, ADO) invariants~\cite{ADO}. The fact that such
quantum invariants have genus bounds is a phenomenon first found in
\cite{LNV:genus-bounds, LNV:genus-unrolled}, but in that case, the genus bound
depends on the Lie algebra and (linearly) on the order of the root of unity.
In contrast, the genus bound in Theorem~\ref{thm.1} depends on the Lie (super)algebra
and not on the representation.
\medskip

A complementary bound to~\eqref{grbound} which also depends on $\phip$ 
can be obtained by an argument as in the MMR Conjecture~\cite{B-NG}.

\begin{theorem}
\label{thm.MMR}
For all $\gg$, $V$ and $K$ as above, we have:
\be
\label{eq.MMR}
J^{\gg}_{K,V}(t,1) = \Delta_K(t)^{\phip} \,.
\ee
\end{theorem}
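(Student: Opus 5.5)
We prove \eqref{eq.MMR} by specializing the quantum parameter $q$ to $1$ while keeping $t=q^{2a}$ as an independent formal variable, following the MMR philosophy of~\cite{B-NG}. The guiding idea is that at $q=1$ with $t$ fixed, the even (bosonic) part of the $R$-matrix degenerates to a permutation, while the odd part becomes a free-fermion system with one fermion for each odd positive root. Such a system computes a power of the Alexander polynomial.

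\textbf{Step 1: structure of the typical module.} We use Kac induction: $V(\nn,a)\cong \Lambda(\gg_{-1})\otimes L_0(\nn)$, where $\gg_{-1}$ is spanned by the odd negative root vectors, $\dim\gg_{-1}=\phip$, and $L_0(\nn)$ is the simple $\gg_{\ov 0}$-module on which the central $\C$ acts by the scalar determined by $a$. At the quantum level, the PBW basis $F_{\beta_1}^{\epsilon_1}\cdots F_{\beta_k}^{\epsilon_k}\otimes v$, with $\epsilon_i\in\{0,1\}$ and $v$ running over a weight basis of $L_0(\nn)$, gives a basis of $V$. In this basis the action of $U_q(\gg)$ is given by matrices whose entries are Laurent polynomials in $q$ and $q^{a}$.

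\textbf{Step 2: the limit $q\to1$ with $t$ fixed.} We write the universal $R$-matrix as $R=\Theta\cdot q^{H\otimes H}$, where $\Theta=\Theta_{\ov 0}\Theta_{\ov 1}$. We then check three things.
\begin{itemize}
\item The factor $q^{H\otimes H}$ contributes $t^{\pm1/2}$-type factors only through the pairing of the central direction with the odd weights. Pairings that involve only $\gg_0$-weights tend to $1$.
\item The even root vectors contribute nothing at $q=1$. Their factor $(q-q^{-1})$ kills the off-diagonal terms of $\Theta_{\ov 0}$.
\item The odd root vectors survive. Since $[E_\beta,F_\beta]=(K_\beta-K_\beta^{-1})/(q-q^{-1})$ and $K_\beta$ acts by something like $q^{a+\cdots}$, the renormalized odd generators satisfy Clifford-type relations whose coefficients are $t^{\pm1/2}-t^{\mp1/2}$.
\end{itemize}
As a consequence, the limiting braiding on $V\otimes V$ factorizes as the tensor product of the trivial braiding on $L_0(\nn)\otimes L_0(\nn)$, which is the flip, and the braiding on $\Lambda(\gg_{-1})^{\otimes2}$. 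After a change of basis, the latter is the $\phip$-fold tensor product of the Burau-type $R$-matrix of $U_t(\gl(1|1))$ on its $2$-dimensional module. The main obstacle is carrying out this degeneration rigorously, uniformly over $\sl(m|n)$ and $\osp(2|2n)$. We need the right normalization, in which $t$ is held fixed and odd vectors are rescaled, so that all entries remain polynomial, and we need to verify that cross terms between distinct odd roots vanish at $q=1$. Since $\gg$ is of type I, $[\gg_{-1},\gg_{-1}]=0$, so the odd roots pairwise anticommute up to factors $(q-q^{-1})$. This is exactly what makes the factorization work, and I would check it first on the distinguished PBW order.

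\textbf{Step 3: evaluate the invariant.} The invariant is normalized by the modified dimension, which is needed because the quantum dimension vanishes. For a $(1,1)$-tangle whose closure is $K$, the invariant is the scalar by which the tangle acts on $V$. In the limit this scalar is the product of the scalar obtained from the flip part on $L_0(\nn)$, which equals $1$ because a permutation braiding on a $(1,1)$-tangle of a knot acts trivially, and the scalar obtained from the $\phip$ copies of the $\gl(1|1)$ theory. By the known identification of the $U(\gl(1|1))$ invariant with the Alexander polynomial, each copy contributes $\Delta_K(t)$. Any framing and normalization factors that are powers of $t$ are absorbed by the symmetric normalization of $\Delta_K$. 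This yields $J^{\gg}_{K,V}(t,1)=\Delta_K(t)^{\phip}$.
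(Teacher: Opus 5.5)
Your argument is correct in outline, but it takes a genuinely different route from the paper's.

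\textbf{The paper's route.} The paper never degenerates the $R$-matrix. It imports the Bar-Natan--Garoufalidis proof of the MMR conjecture (Kontsevich integral plus weight systems, transported to $\mathfrak{g}$ via its supertrace form). This gives the leading term $\prod_{\alpha\in\Deodd}\Delta(e^{h(\lambda,\alpha)})/\prod_{\alpha\in\Deev}\Delta(e^{h(\lambda,\alpha)})$. It then uses $(\lambda,\alpha)=\ve a+L_\alpha(\nn)$: at $q=1$ with $t=e^{ha}$ and $\nn$ fixed, every even factor becomes $\Delta(1)=1$ and every odd factor becomes $\Delta(t)$.

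\textbf{Your route.} You stay inside the quasi-$R$-matrix/PBW framework of Section~\ref{sec.lie} and degenerate the braiding directly. The step you flag as the main obstacle does go through:
\begin{itemize}
\item The only pole in the action of an odd $E_\beta$ is the factor $[a+s]_q$ coming from the Cartan part of $[E_\beta,F_\beta]$.
\item Hence $(q-q^{-1})E_\beta$ tends to $(t^{1/2}-t^{-1/2})$ times the contraction $\partial_\beta$ on $\Lambda(\mathfrak{g}_{-1})$, and acts as the identity on $V(\nn)$. The root-vector normalization cancels against the coefficient of $E_\beta\ot F_\beta$ in $\Theta$.
\item $(q-q^{-1})[E_\beta,F_\gamma]\to 0$ for $\beta\neq\gamma$, and every even factor of $\Theta$ tends to $1$.
\item The $\HH$-factor tends to $q^{(\lambda,\lambda)}$ times $t^{-1/2}$ per odd PBW letter.
\end{itemize}
So each fermionic factor is a braiding $\check R_\beta$ on $\C^{1|1}\ot\C^{1|1}$ with eigenvalues $1,-t^{-1}$ and twist $t^{-1/2}$, which yields the Conway-normalized $\Delta_K(t)$. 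The ``cross terms'' that must vanish are these rescaled supercommutators. The products $\partial_\beta\partial_\gamma\ot f_\beta f_\gamma$ in $\Theta$ survive, and they are what make $\Theta\to\prod_\beta\bigl(1+(t^{1/2}-t^{-1/2})\partial_\beta\ot f_\beta\bigr)$.

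\textbf{Common input and trade-offs.} Both proofs hinge on the same fact: each odd positive root contains $\alpha_m$ exactly once. So $(\lambda,\beta)$ has $a$-coefficient $1$ for odd $\beta$ and $0$ for even $\beta$. In your version this is what gives every fermion the same variable $t$ and makes $V(\nn)$ decouple as a plain permutation.
\begin{itemize}
\item Your approach is more elementary and self-contained, and it explains the exponent $\phip$ as a count of fermions. It yields only the $q=1$ specialization, and it needs regularity at $q=1$ of the PBW action, an integral-form statement beyond Lemma~\ref{lemma: coefficients on V(n,a)}.
\item The paper's approach is shorter on the page and gives the full leading-order term with $h\lambda$ fixed. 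However, it rests on extending the Kontsevich-integral MMR machinery to superalgebras, which the paper only asserts.
\end{itemize}

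\textbf{Bookkeeping corrections.}
\begin{itemize}
\item In Step 1 the odd $E$'s do not act by Laurent polynomials in $q,q^a$. Their entries lie in $\Q(q)q^{a}+\Q(q)q^{-a}$ with a $1/(q-q^{-1})$ pole, which is exactly why your later rescaling is needed.
\item $\HH$ contains the global scalar $q^{(\lambda,\lambda)}$, quadratic in $a$. It diverges in your limit and is not a power of $t$. You must divide by $\theta_V^{w}$, where $\theta_V=q^{(\lambda,\lambda+2\rho)}$, before letting $q\to1$. The leftover $q^{-w(\lambda,2\rho)}\to t^{w\phip/2}$ is then exactly the product of the fermionic twist corrections, so no stray power of $t$ needs to be ``absorbed''.
\item No modified dimension enters: $J$ is simply the scalar of the $(1,1)$-tangle.
\end{itemize}
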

For the representations $V(\boldsymbol{0},a)$ of $\fsl(m|1)$ this was shown 
in~\cite{Kohli-Pa}. Combining both bounds, we obtain the following.

\begin{corollary}
For all $\gg$, $V$ and $K$ as above, we have:
\be
\label{abound}
\phip \deg_t \Delta_K(t) \leq \deg_t J^{\gg}_{K,V}(t,q) \leq 2\phip \, \text{genus}(K)
\,.
\ee
\end{corollary}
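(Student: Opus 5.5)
The corollary follows by combining Theorem~\ref{thm.1} and Theorem~\ref{thm.MMR}. The upper bound $\deg_t J^{\gg}_{K,V}(q,t) \leq 2\phip\,\text{genus}(K)$ is exactly Theorem~\ref{thm.1}, so all the work is in the lower bound. For that I will pass from the two-variable polynomial to its specialization and use the elementary fact that specializing one variable cannot increase the span in the other.

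Here $\deg_t$ of a Laurent polynomial means its span in $t$: the difference between the maximal and minimal exponents of $t$ occurring. (For the Alexander polynomial, normalized symmetrically, this is the usual $\deg_t \Delta_K$.) Write
\[
J^{\gg}_{K,V}(q,t)=\sum_{k=k_{\min}}^{k_{\max}} c_k(q)\,t^k,
\qquad c_k(q)\in \Z[q^{\pm1}],
\]
with $c_{k_{\min}}$ and $c_{k_{\max}}$ nonzero. Setting $q=1$ gives
\[
J^{\gg}_{K,V}(1,t)=\sum_k c_k(1)\,t^k .
\]
Every exponent of $t$ in this specialization lies in $[k_{\min},k_{\max}]$, so its $t$-span is at most $k_{\max}-k_{\min}=\deg_t J^{\gg}_{K,V}$. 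This holds even if some $c_k(1)$ vanish.

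Theorem~\ref{thm.MMR} identifies the specialization as $\Delta_K(t)^{\phip}$; the notation $J^{\gg}_{K,V}(t,1)$ there refers to this $q=1$ specialization. Since $\Z[t^{\pm1}]$ is an integral domain, the span of a product is the sum of the spans, and so
\[
\deg_t \Delta_K(t)^{\phip}=\phip\,\deg_t\Delta_K(t).
\]
We also have $\Delta_K(1)=1\neq 0$, so the specialization is not the zero polynomial and its span is well defined. Combining these,
\[
\phip\,\deg_t\Delta_K \;\le\; \deg_t J^{\gg}_{K,V} \;\le\; 2\phip\,\text{genus}(K).
\]

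The only points needing care are bookkeeping. First, the variable ordering in the corollary is $(t,q)$ while Theorem~\ref{thm.1} uses $(q,t)$; both denote the same polynomial, so the $t$-degree is unaffected. Second, the conventions must match: $\deg_t$ has to mean the same thing (the span) in both theorems, and $t$ must be the same variable $t=q^{2a}$. With the conventions fixed as above, there is no real obstacle.
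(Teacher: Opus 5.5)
Your proposal is correct and is the same argument as the paper's, which obtains the corollary by combining Theorem~\ref{thm.1} (upper bound) with Theorem~\ref{thm.MMR} (lower bound). You have simply made explicit the two elementary facts the paper leaves implicit: the $q=1$ specialization cannot increase the $t$-span, and $\deg_t \Delta_K^{\phip} = \phip \deg_t \Delta_K$ with $\Delta_K(1)=1 \neq 0$.
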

Hence the inequalities in~\eqref{grbound} and in~\eqref{abound} are equalities 
when
\be
\label{notloose}
\deg_t \Delta_K(t)=2 \, \text{genus}(K) \,,
\ee
as is the case of fibered or alternating knots. A detailed study of knots and
their associated colored polynomials for the 4, 8, 12 and 16-dimensional
representations of $\fsl(2|1)$ is given in~\cite{GL:patterns}.
\medskip

Theorem \ref{thm.1} highlights the importance of Lie superalgebras in low dimensional
topology, as conjecturally their quantum knot invariants (which are always Vassiliev
invariants) may detect the genus of a knot. What's more, the $t$-variable of
the quantum knot polynomials suggests a deeper and yet not-understood connection
with a categorification theory which would explain the reason for the above
inequality, as well as a potential equality. 
\medskip

Note that the inequality~\eqref{gbound} is not an equality, since for instance the
left hand side is a mutation invariant, whereas the right hand side is not. Nevertheless,
it was experimentally observed in~\cite{GL:patterns} that the inequality~\eqref{gkbound}
is in fact an equality for $n=2$ and all 352.2 million knots with at most 
19 crossings. This motivates the following question.

\begin{question}
\label{que.1}
Fix a simple Lie superalgebra $\gg$ of type I other than $\psl(n|n)$. Is it true that
for every knot $K$ in 3-space, we have:
\be
\label{gebound}
\max_{V} \deg_t J^{\gg}_{K,V}(t,q) = 2\phip \, \text{genus}(K) 
\ee
where the maximum is taken over the set of all typical representations of $\gg$?
\end{question}

The proof of Theorem~\ref{thm.1} follows from first principles of quantum group theory
and topology, following the ideas of \cite{LNV:genus-unrolled, LNV:genus-bounds}. Explicitly, 

\noindent
$\bullet$ We fix a Seifert surface $\S$
of a knot $K$, put it in band form, and then fuse representations in each band to compute
the invariant $J^{\gg}_{K,V}$ in terms of the operator invariant $F(\TS,X)$ of a bottom tangle $\TS$ colored with $X=\Vnna\ot \Vnna^*$ and
a tensor attached to the base of the bands of the Seifert surface (this is the only common step of our proof with that of \cite{Kohli-Tahar}).

\noindent
$\bullet$ Following an idea of Habiro ~\cite{Habiro:bottom}, we compute the operator invariant $F(\TS,X)$ from the action of a universal invariant $Z_{\TS}\in U_q(\gg)^{\ot N}$ acting on $X^{\ot N}$, where $N=2g(\S)$.

\noindent
$\bullet$ 
A careful argument involving the Hopf algebra structure of $U_q(\gg)$ and PBW basis
implies a key algebraic bound for the action of $U_q(\gg)$ on $X$
(see Proposition \ref{prop.algb}). Namely, every element of $U_q(\gg)$ acts on the
``standard'' basis of $X$ with coefficients which are polynomials in $q^{-2a}$ of
degree at most the number of odd roots.

\noindent
$\bullet$ 
Applying this algebraic bound ($N$ times) to the action of $Z_{\TS}$ on $X^{\ot N}$ gives a proof of Theorem~\ref{thm.1}.
\medskip

Theorem~\ref{thm.MMR} follows from a MMR argument, which uses the fact that type I
Lie superalgebras have a nondegenerate symmetric bilinear form given by a supertrace
(analogue of the Killing form for simple Lie algebras) and 
identifies the specialization at $q=1$ with a product over all positive roots.
By the root properties of Lie superalgebras, each root contributes a factor of
$\Delta(t)$ or $1$ depending on being odd or even. This proves 
Equation~\eqref{eq.MMR}. 

\subsection{Further directions}
\label{sub.further}

We end this section by describing some further directions. For simplicity and elegance,
we stated Theorem~\ref{thm.1} for knots in $S^3$. However, its proof reveals that it 
also holds for links, once the appropriate definitions are discussed. To do so, 
we use any compact, oriented, possibly disconnected surface $\S$ whose boundary is an 
oriented link $L$ in $S^3$.
Then, the following analogue of inequality \eqref{grbound} holds. 

\begin{theorem}
\label{thm.1.links}    
Fix a Lie superalgebra $\gg$ of type I and a typical
representation $V$ of $\gg$. Then for every oriented link $L$ (with all components
colored by $V$) and every surface $\Sigma$ whose boundary is $L$, we have
\be
\label{grboundL}
\deg_t J^{\gg}_{L,V}(t,q) \leq \phip \, (1-\chi(\Sigma)) \,.
\ee
\end{theorem}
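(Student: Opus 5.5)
We follow the same scheme as the proof of Theorem~\ref{thm.1}, now for a link $L$ with every component colored by $V=\Vnna$ and an arbitrary compact oriented surface $\S$ with $\partial\S=L$. We bound $\deg_t J^{\gg}_{L,V}(t,q)$ exactly as in \eqref{grboundL}, with $\deg_t$ meaning the same quantity as in Theorem~\ref{thm.1}. The extra invariants of links (that is, $\chi(\Sigma)$ and the number of components) enter only through how many bands the surface has.

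\textbf{Step 1: band form and reduction to connected surfaces.} Removing closed components of $\S$ only increases $\chi(\S)$, hence lowers the right-hand side of \eqref{grboundL}. So we may assume every component of $\S$ has nonempty boundary. Put $\S$ in disk--band form: $k$ disks, one per connected component, and $N$ bands, with $\chi(\S)=k-N$.

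We tube components together so that everything sits on a single disk. Connecting the $k$ disks by $k-1$ extra untwisted, unlinked bands changes $\chi$ by $-(k-1)$, and the resulting surface $\S'$ is connected with the same boundary up to isotopy. Since $1-\chi(\S')=N$, it suffices to prove
\be
\deg_t J^{\gg}_{L,V} \leq \phip\, N' , \qquad N'=\#\text{bands of a connected }\S' .
\ee
Applying this to $\S'$ gives $\phip(1-\chi(\S)+k-1)$. That is too weak when $k>1$, so we do \emph{not} tube. Instead we keep $k$ base disks and treat each disk separately, as follows.

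\textbf{Step 2: operator form.} Cutting along the base disks, the $N$ bands form a bottom tangle $\TS$ with $N$ components. Each band core carries $X=V\ot V^*$ after fusing the two parallel strands. The invariant is then a contraction of $F(\TS,X)\in \mathrm{Hom}(\C, X^{\ot N})$ with fixed tensors at the base disks. Each disk $D_j$ contributes a tensor built from evaluations and coevaluations of $V$, together with the pivotal/ribbon normalization (framing and $q^{2\rho}$ factors). These tensors carry $t$-weights independent of $K$, and their $t$-degrees are controlled by the weight of $V$. Writing $F(\TS,X)$ as $Z_{\TS}\in U_q(\gg)^{\ot N}$ acting on $X^{\ot N}$, Proposition~\ref{prop.algb} makes each tensor factor contribute coefficients polynomial in $q^{-2a}$ of degree at most $\phip$ in the standard basis. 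This gives degree at most $\phip N$ in $t^{-1}$, before normalization.

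\textbf{Step 3: normalization, the main obstacle.} The bound we want is $\phip(N-k+1)$, not $\phip N$. The first $\phip$ comes from the extra factor $1$ in the knot case, where $1-\chi=2g$ and $N=2g$. The savings must come from the boundary contraction. For each disk, the pivotal element and the evaluation of the standard basis of $V\ot V^*$ against the highest/lowest weight vectors shift the $t$-exponent by a fixed amount that cancels $\phip$ units, i.e.\ the factor $t^{\pm\phip/2}$ per component coming from the $q^{2\rho}$ contribution of odd roots. There is also the global renormalization by the modified dimension, since $V$ is typical and has vanishing quantum dimension. We must track these per-component shifts: $k$ disks contribute $-\phip k$, and the modified-trace normalization contributes $+\phip$. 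Together with the symmetric degree convention of Theorem~\ref{thm.1} (where the same bookkeeping turns $\phip\cdot 2g$ into the stated bound), this gives $\phip(N-k+1)=\phip(1-\chi(\S))$.

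Checking these exponent shifts carefully is the delicate step. I would first verify them on the unknot with a disk ($N=0$, bound $\phip$) and on the two-component unlink with two disks ($N=0$, bound $-\phip$, forcing the invariant to vanish, as expected from the vanishing modified dimension of split links). I would then carry out the same calculation in general.
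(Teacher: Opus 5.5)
Your connected case ($k=1$) matches the paper's argument. For $k>1$, however, Step 3 has a genuine gap.

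Everything you propose to track there acts on the whole invariant by a single monomial in $q^{a}$ times $\mathbb{Q}(q)$-coefficients. That includes the $K_{2\rho}$ inside the right evaluations $\rev_V$ at the base disks, framing factors, and any global normalization. Such factors translate the window of $q^a$-exponents but never shrink it, so they cannot turn a span of $\phip N$ into $\phip(N-k+1)$.

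Concretely, each of the $k-1$ uncut disks contributes a factor $q^{-a\phip}$. This moves the window $[-N\phip,N\phip]$ to $[-(N+k-1)\phip,(N-k+1)\phip]$: the top end drops, but so does the bottom. Your net shift $-(k-1)\phip$ is the correct one-sided statement. It comes simply from the cut-open disk contributing no $\rev_V$, not from a modified trace. No modified dimension enters, since $J$ is the scalar of the $(1,1)$-tangle, and a modified dimension would not be a monomial in $q^a$ anyway. Your own test case exposes the problem: for the two-component unlink the window is the single exponent $-\phip$, and no bookkeeping of monomial factors can force a nonzero quantity to vanish.

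The missing ingredient is an independent bound on the lower end. The paper obtains it from the Weyl involution $\omega$: by Proposition~\ref{prop: symmetry property}, $J^{\mathfrak{g}}_{L,V(\nn,a)}(t,q)=J^{\mathfrak{g}}_{L,V(\nn',a)}(q^{l_{\nn}}t^{-1},q)$. The shifted window above was derived uniformly in $\nn$, so it also holds for $J^{\mathfrak{g}}_{L,V(\nn',a)}$. Transport it through $t\mapsto q^{l_{\nn}}t^{-1}$; the factor $q^{l_{\nn}}$ only changes $\mathbb{Q}(q)$-coefficients, while the exponents are negated. Intersecting with the original window gives $[-(N-k+1)\phip,(N-k+1)\phip]$, hence $\deg_t J\le \phip(N-k+1)=\phip(1-\chi(\Sigma))$. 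For the unlink the intersection is empty, and that is what forces $J=0$.

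A ``symmetric degree convention'' cannot substitute for this step. You need an actual symmetry of the invariant under $t\mapsto t^{-1}$ (up to $\nn\leftrightarrow\nn'$). You also need the fact that the surface framing has total writhe $0$, so that absolute exponents, and not just spans, are controlled.
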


When $\S$ is connected, the proof is the same as in Theorem \ref{thm.1}, but for
disconnected $\S$ we need the symmetry property of the $J^{\gg}_{L,V}$'s of
Proposition \ref{prop: symmetry property}. As an example of Theorem \ref{thm.1.links},
consider the $(n,0)$-parallel $K^{(n,0)}$ of a 0-framed knot $K$ in $S^3$ and the
$\fsl(2|1)$-polynomial colored by the $V(0,a)$-representation (i.e., the Links--Gould
polynomial). Then, using $n$ copies of a minimal genus Seifert surface for $K$,
the above theorem asserts that
\be
\label{Knbounds}
\deg_t J^{\fsl(2|1)}_{K^{(n,0)}}(t,q) \leq 4ng -2(n-1) \,.
\ee
Using the formulas for $J^{\fsl(2|1)}_{K^{(n,0)}}(t,q)$ in terms of the
$V(0,ka)$-polynomials for $k \leq n$~\cite{Ga:colored} and the values of the latter
polynomials from~\cite{GL:patterns} we can indeed confirm the bounds~\eqref{Knbounds}
for $n=1,2,3,4$ for all knots with at most 11 crossings, and for $n=1,2$ for all
knots with at most 16 crossings.
\medskip

Regarding the analogue of Theorem~\ref{thm.1} when $q$ is a root of unity (i.e., the
ADO analogue for $\fsl(2|1)$), these link invariants have been studied by Ha~\cite{Ha}.
If one can lift the colored $\gg$ invariants of a knot for a type I Lie superalgebra
to a suitable Habiro ring of an \'etale map (as was described and conjectured
in~\cite[Rem.1.12]{GW:lift}), then the genus bounds for the knot polynomials for
generic $q$ imply (and are imply by) genus bounds for the corresponding polynomials
at roots of unity. Since this involves quite different ideas from those of the
current paper, we plan to discuss it in detail in a subsequent publication. 
\medskip

Concerning the computation of the $\mathfrak{sl}(m|n)$ knot polynomials colored by
the $2^{mn}$-dimension-al typical representation $V(0,a)$, this has been achieved
for $m=2,3,4,5$ and $n=1$ by Shana Li and the first author. The results of these
knot polynomials (which are mutation-invariant) will be discussed elsewhere.

\subsection*{Acknowledgements}
The authors wish to thank Nathan Geer and Vera Serganova for enlightening
conversations, as well as Ben-Michael Kohli and Matthew Harper for comments
on a first draft of the paper.


\section{Classical and quantum Lie superalgebras}
\label{sec.lie}

In this section we recall the basic definitions of Lie superalgebras and their
quantized versions.

\subsection{Basics on Lie superalgebras}
\label{sub.basics}

A (classical) Lie superalgebra is a (finite dimensional) $\BZ/2\BZ$-graded vector space
$\gg=\gg_{\overline 0} \, \oplus \, \gg_{\overline 1}$ of even and odd subspaces with
a graded Lie bracket that satisfies a graded version of the antisymmetry and the Jacobi
identity. Simple Lie superalgebras (i.e. having no non-trivial ideals) were classified by 
Kac \cite{Kac:superalgebra}, they are divided into classical ones (i.e. $\gg_{\ov{1}}$ is 
completely reducible as a $\gg_{\ov{0}}$-module) and non-classical (also known as
Cartan type). In turn, classical simple Lie superalgebras are divided into type I and
type II, depending on whether $\ggodd$ is a reducible $\ggev$-module or not.

\medskip
In this paper we are interested in simple Lie superalgebras in which the even part
splits as $\gg_{\ov{0}}=\gg_0\oplus \C$ where $\gg_0$ is a semisimple Lie algebra.
The only such algebras are the special linear $\sl(m|n)$ with $m\neq n$ and the
orthosymplectic $\osp(2|2n)$ \cite{Kac:superalgebra}. These have even parts 
$$
\sl(m|n)_{\ov{0}}=\sl(m)\oplus \sl(n)\oplus \C, \qquad
\osp(2|2n)_{\ov{0}}= \sp(2n)\oplus \C.
$$
Note that $\sl(n|n)$ has non-trivial center $Z$ (of dimension 1 generated by the
identity matrix), hence it is not simple, but the quotient $\psl(n|n):=\sl(n|n)/Z$ is
simple. According to Kac, $\sl(m|n)$ with $m\neq n$, $\psl(n|n)$ and $\osp(2|2n)$
account for all type I Lie superalgebras. However, $\psl(n|n)_{\ov{0}}$ is semisimple,
hence the theorems of this paper do not apply.

\subsection{Roots}
\label{sub.roots}

The set $\Phi^+=\Deev\cup\Deodd$ of positive roots of a Lie superalgebra 
is a disjoint union of the sets of even  $\Deev$ and odd roots $\Deodd$ respectively.

An important feature of type I Lie superalgebras is that 
\begin{itemize}
\item
  there is a unique odd simple root $\a_m$ (we denote $\a_m=\a_1$ for
  $\mathfrak{osp}(2|2n)$), 
\item
  every odd root $\a \in \Deodd$ can be written uniquely in the form $\a=\b+\a_m$
  for some $\b \in \Deev$.
\end{itemize}

We discuss roots on $\fsl(m|n)$ first, which has rank $r=m+n-1$.
Its positive roots are described  in terms of a vector space with orthogonal basis
$(\ve_1,\dots,\ve_m,\delta_1,\dots,\delta_n)$
and inner product
\be
\label{islmn}
(\ve_i,\ve_j)= \delta_{ij}, \qquad (\delta_i, \delta_j)=-\delta_{ij}, \qquad
(\ve_i,\delta_j)=0 \,.
\ee
The simple roots are given by 
\be
\begin{aligned}
\Pi =& \{\a_1=\ve_1-\ve_2,\dots, \a_{m-1}=\ve_{m-1}-\ve_m, \a_m=\ve_m-\delta_1, 
\\ &
\a_{m+1}= \delta_1-\delta_2, \dots, a_{m+n-1}=\delta_{m+n-1}-\delta_{m+n} \} 
\end{aligned}
\ee
which are all even except $\a_m$ which is odd.
The symmetrized Cartan matrix of $\fsl(m|n)$ is the $r \times r$ matrix of inner
products $(\a_i,\a_j)$. For the unsymmetrized Cartan matrix $(a_{ij})$, see
\cite[page 100]{Frappat}. 
Setting $d_i=1$ for $1\leq i\leq m$ and $d_i=-1$ for $i=m,\dots,r$, the symmetrized and 
unsymmetrized Cartan matrices are related by $(\a_i,\a_j)=a_{ij}d_i$.
The positive roots are given by $\a_{ij}$ for $1 \leq i < j \leq m+n$, and
\begin{itemize}
\item
  the even ones are those with $1 \leq i < j \leq m$ or $m+1 \leq i < j \leq m+n$ 
\item
  the odd ones are those with $1 \leq i \leq m < j \leq m+n$. 
\end{itemize}
A convex order of the set of positive roots is the lexicographic order
$\a_{ij}<\a_{k\ell}$ if and only if $i<k$ or $i=k$ and $j < \ell$.
\medskip

Next, we discuss the Lie superalgebra $\osp(2|2n)$ which has rank $n+1$. Its
positive roots are described in terms of 
a vector space with orthogonal basis $(\ve_1,\delta_1,\dots,\delta_n)$
and inner product
\be
\label{iosp2n}
(\ve_1,\ve_1)=1, \qquad (\delta_i, \delta_j)=-\delta_{ij}, \qquad
(\ve_1,\delta_i)=0 \,.
\ee
The simple positive roots are given by
\be
\Pi = \{\a_1=\e_1-\d_1,\a_2=\delta_1-\delta_2, \a_3=\delta_2-\delta_3, \dots, \a_{n}=
\delta_{n-1}-\delta_n, \a_{n+1}=2\delta_1 \} \,
\ee
which are all even except $\a_1$ which is odd. The symmetrized Cartan matrix of 
$\osp(2|2n)$ is the $(n+1) \times (n+1)$ 
matrix $(\a_i,\a_j)$, this is related to the unsymmetrized Cartan $(a_{ij})$ 
by $(\a_i,\a_j)=a_{ij}d_i$ where $d_1=1, d_i=-1$ for $2\leq i\leq n$ and $d_{n+1}=-2$.
The Lie superalgebra $\osp(2|2n)$ has $n^2$ even positive roots
\begin{itemize}
\item
  $2 \delta_i$ for $1 \leq i \leq n$
\item
  $\delta_i \pm \delta_j$ for $1 \leq i < j \leq n$
\end{itemize}
and $2n$ odd positive roots $\ve_1 \pm \delta_k$ for $1 \leq k \leq n$.
A convex ordering of the set of positive roots is given by
\be
\begin{aligned}
  & \delta_1 -\delta_2 < \delta_1 -\delta_3 < \dots < \delta_1 -\delta_n \\
  < & \,\ve_1 -\delta_n < \ve_1 -\delta_{n-1} < \dots < \ve_1 -\delta_1 
  <  \,\ve_1 +\delta_1 < \ve_1 +\delta_{2} < \dots < \ve_1 +\delta_n \\
  < & \,\delta_n +\delta_{n-1} < \dots < \delta_2 -\delta_1 
  <  \, 2 \delta_n < 2 \delta_{n-1} < \dots < 2 \delta_1 \,. 
\end{aligned}  
\ee

\begin{remark}
In both cases ($\sl(m|n)$ and $\mathfrak{osp}(2|2n)$) we have $d_m=1$ 
(recall $m=1$ for $\mathfrak{osp}(2|2n)$).
\end{remark}

We denote by $L_R$ the root lattice, i.e. the lattice in the dual Cartan subalgebra 
generated by the simple roots. This is an inner product space with the pairing 
$(\a_i,\a_j)$ as above. 

\subsection{Quantum supergroups} 

Let $(a_{ij})$ be the (unsymmetrized) Cartan matrix of $\gg=\sl(m|n)$ 
(with $m\neq n$) or $\gg=\osp(2|2n)$. Let $q$ be a generic parameter and set 
$q_i=q^{d_i}$ for $i=1,\dots,r$ ($r=$ rank of $\gg$). Then the quantum supergroup
$U_q(\gg)$ is the $\C$-algebra with generators $E_i,F_i,K_i^{\pm 1}$, $i=1,\dots,r$ 
and relations
\begin{align*}
K_iE_j&=\qi^{a_{ij}}E_jK_i, & K_iF_j&=\qi^{-a_{ij}}F_jK_i & K_iK_j&=K_jK_i, \\
[E_i,F_j]&=\d_{ij}\frac{K_i-K_i^{-1}}{\qi-\qi^{-1}}, & K_iK_i^{-1}&=K_i^{-1}K_i=1, & & 
\end{align*}
for every $i,j$, where $[x,y]=xy-(-1)^{|x||y|}yx$ denotes the super-commutator,
plus $q$-Serre relations for which we refer the reader to \cite{Yamane}. 
We declare $E_m,F_m$ to be odd while all other generators are declared 
to be even (recall that we set $m=1$ for $\osp(2|2n)$).

\medskip
This is a Hopf superalgebra if we define 
\begin{align*}
\De(E_i)&=E_i\ot K_i+1\ot E_i, & \De(F_i)&=K_i^{-1}\ot F_i+F_i\ot 1,& 
 \De(K_i)&=K_i\ot K_i, 
\end{align*}
 and
\begin{align*}
S(E_i)&=-E_iK_i^{-1}, & S(F_i)&=-K_iF_i, & S(K_i)&=K_i^{-1}
\end{align*}
for every $i=1,\dots,r$. 
\medskip

In what follows, we denote by $\UQ^+$ (resp. $\UQ^-$) the $\Q(q)$-subalgebra of
$U_q(\gg)$ generated by $E_1,\dots,E_r$ (resp. $F_1,\dots, F_r$). We denote by
$\UQ^0$ the Cartan $\Q(q)$-subalgebra generated by the $K_1,\dots,K_r$.

\subsection{Root vectors} 

As mentioned above, the roots are $\Phi^+=\Deev\cup\Deodd$. Fixing a convex order for
$\Phi^+$, for every $\a\in \Phi^+$ one can define a root vector $E_{\a}\in\UQ^+$ as 
in ~\cite[Sec.5]{Yamane}. Then, for every function $J:\Phi^+\to \N_{\geq 0}$ we can
define 
$$
E^J:=\prod_{\a\in\Phi^+}E_{\a}^{J(\a)}\in \UQ^+.$$
Note that $E^J=0$ if $J(\a)\geq 2$ for some $\a\in\Deodd$. We will similarly define
vectors $F^J\in \UQ^-$ for every $J:\Phi^+\to \N_{\geq 0}$. 

\begin{lemma}
\label{lemma: odd root vectors have a unique Em}
\begin{enumerate}
\item 
For every $\a\in\Deev$ one can write 
$$
E_{\a}=\sum_I c_{I}\cdot E_{i_1}\dots E_{i_k}
$$
where each $E_{i_j}$ is even and $c_I\in \Q(q)$ for every $I=(i_1,\dots,i_k)$.
\item 
For every $\a\in\Deodd$ one can write 
$$
E_{\a}=\sum_I c_{I}\cdot E_{i_1}\dots E_{i_k}
$$
where for every $I=(i_1,\dots,i_k)$ there is a unique $j$ such that $E_{i_j}$ 
is odd (hence $i_j=m$).
\end{enumerate}
A similar statement holds for the negative root vectors.
\end{lemma}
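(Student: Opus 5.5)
The approach is to use the $\Z^r$-grading of $\UQ^+$ by the root lattice $L_R$. Give $E_i$ degree $\a_i$. All defining relations of $U_q(\gg)$ restricted to $\UQ^+$, namely the $q$-Serre relations of \cite{Yamane}, are homogeneous in this grading. Hence $\UQ^+$ decomposes as $\bigoplus_{\mu} \UQ^+_\mu$, and each graded piece $\UQ^+_\mu$ is spanned by the monomials $E_{i_1}\cdots E_{i_k}$ with $\a_{i_1}+\dots+\a_{i_k}=\mu$.

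The first step is to check that the root vector $E_\a$ defined in \cite[Sec.5]{Yamane} is homogeneous of degree $\a$. Yamane builds these vectors inductively from the simple $E_i$ by $q$-(super)commutators of the form $E_\b E_\gamma - (\pm) q^{c} E_\gamma E_\b$ with $\b+\gamma=\a$. Each such bracket preserves homogeneity and adds degrees. By induction along the convex order, $E_\a\in \UQ^+_\a$. We may therefore write $E_\a=\sum_I c_I E_{i_1}\cdots E_{i_k}$, summing only over words $I$ whose degrees satisfy $\sum_j \a_{i_j}=\a$.

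The second step is to count occurrences of the index $m$ in such a word. Write $\a=\sum_i k_i\a_i$ with $k_i\in\N$. These coefficients are well defined because the simple roots form a basis of $L_R$. A word of degree $\a$ then contains exactly $k_m$ letters equal to $m$. The problem is thus reduced to showing that $k_m=0$ for $\a\in\Deev$ and $k_m=1$ for $\a\in\Deodd$. This is a direct check from the explicit root lists in Section~\ref{sub.roots}.

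For $\fsl(m|n)$ we have $\a_{ij}=\a_i+\a_{i+1}+\dots+\a_{j-1}$. This sum contains $\a_m$ exactly once if $i\le m<j$, which is the odd case, and not at all otherwise, which is the even case. For $\osp(2|2n)$ the computation is as follows.
\begin{itemize}
\item The even roots $\delta_i\pm\delta_j$ and $2\delta_i$ are combinations of $\a_2,\dots,\a_{n+1}$ only. For example, $2\delta_i=2(\a_i+\dots+\a_n)+\a_{n+1}$, interpreting the sum as empty when $i$ is out of range.
\item The odd roots satisfy $\ve_1\pm\delta_k=\a_1+(\delta_1\pm\delta_k)$, where $\delta_1\pm\delta_k$ for $k\neq 1$, or $0$, or $2\delta_1$ is even or zero. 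So each odd root has $\a_1$-coefficient exactly one.
\end{itemize}
This is precisely the stated property that every odd root is uniquely $\b+\a_m$ with $\b$ even or zero. The statement for the negative root vectors follows in the same way, by applying the grading with $F_i$ in degree $-\a_i$, or by using the standard anti-automorphism exchanging $E_i$ and $F_i$.

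The main obstacle is the first step: one must confirm that Yamane's construction of the $E_\a$ really is by iterated homogeneous $q$-brackets, including in the $\osp$ case where root vectors such as $E_{2\delta_i}$ and $E_{\ve_1+\delta_k}$ arise. I expect this to hold because these vectors are obtained from Lusztig-type braid group actions or from $q$-commutators, both of which preserve the weight grading. To avoid depending on construction details, I would argue via the $K_i$-action instead. Any root vector satisfies $K_iE_\a K_i^{-1}=q^{(\a_i,\a)\cdot(\text{sign})}E_\a$, so it is a weight vector of weight $\a$. For generic $q$ the weight spaces of $\UQ^+$ coincide with the graded pieces $\UQ^+_\a$, which gives homogeneity without appeal to the explicit construction.
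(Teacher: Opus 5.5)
Your proof is correct, but it takes a somewhat different route from the paper. The paper's proof is the one line ``induction from Yamane's formulas'': follow the recursive $q$-commutator definition of the $E_\a$ and check at each step that even root vectors use only even generators and odd ones pick up exactly one $E_m$. Your first justification of homogeneity, via iterated homogeneous $q$-brackets, is essentially that induction.

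The genuinely different ingredient is your reduction through the $L_R$-grading of $\UQ^+$. Any element of $\UQ^+_\a$ is a combination of words containing exactly $k_m$ letters $m$, where $k_m$ is the $\a_m$-coefficient of $\a$. So the lemma becomes the purely combinatorial fact that $k_m$ is $0$ or $1$ according to parity, which you read off the root lists. For $\osp(2|2n)$ this is simply the $\ve_1$-coordinate of the root.

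This route buys independence from the convex order and from the details of the recursion. It applies verbatim to any weight vector, so the negative root vectors (weight $-\a$) come for free.

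Its construction-free variant via $K_i$-weights needs the map $\mu\mapsto (q^{(\a_i,\mu)})_i$ to be injective on $L_R$. That requires $(\cdot,\cdot)$ to be nondegenerate on $L_R$ and $q$ not a root of unity. Nondegeneracy holds for $\sl(m|n)$ with $m\neq n$ and for $\osp(2|2n)$, but fails for $\sl(n|n)$. You should state this hypothesis explicitly rather than just say ``generic $q$''.

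There is also a minor slip in your $\osp(2|2n)$ example. With the distinguished base, $\a_{n+1}=2\delta_n$ (the paper's $2\delta_1$ is a misprint), and then $2\delta_i=2(\a_{i+1}+\dots+\a_n)+\a_{n+1}$, not $2(\a_i+\dots+\a_n)+\a_{n+1}$. This does not affect your conclusion that even roots involve only $\a_2,\dots,\a_{n+1}$.
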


\begin{proof}
An induction from Yamane's formulas.
\end{proof}

For every $J:\Phi^+\to \N_{\geq 0}$ we denote 
$$
d(J):=\sum_{\a\in \Phi^+}J(\a)\a\in L_R.
$$
Then, it follows that  
$$
K_iE^J=q^{(d(J),\a_i)}E^JK_i, \ \ K_iF^J=q^{-(d(J),\a_i)}F^JK_i
$$
for every $J:\Phi^+\to \N_{\geq 0}$ and every $i=1,\dots,r$.

\subsection{PBW bases}
\label{sub.PBW}

The universal enveloping algebra of a simple Lie (super) algebra has a PBW basis
that depends on any ordering of the set of positive roots of it. So does the quantized 
Lie (super) algebra, but this requires a definition of $E_\a$ for all positive roots
$\a$. Such a definition can be given inductively, using for instance a convex order
of the set of positive roots, that is, an order $<$ that satisfies the property
that if $\a,\b,\a+\b$ are positive roots with $\a<\b$, then $\a<\a+\b<\b$. That is,
the sum of two roots is in the interval between the two. For simple Lie algebras
convex orders are in bijection with all reduced expressions of the longest element
of the Weyl group. For Lie superalgebras, there is no Weyl group (though there is
a Weyl groupoid). On the other hand, there are rather canonical orderings of the
set of positive roots of the Lie superlagebras $\fsl(m|n)$ and $\osp(2|2n)$.

In~\cite[Sec.2.1]{Yamane} Yamane discusses an ordering of the positive roots of
$\fsl(m|n)$
and of $\osp(2|2n)$. Some of our arguments below involve $q$-commutation relations
known as the Levendorki--Soibelman formula~\cite{LS}, which uses
a convex order of the set of positive roots and expresses the $q$-commutator of
$E_\a$ with $E_\b$ for two positive roots $\a$ and $\b$ with $\a < \b$ as follows:
\be
\label{LS}
E_\a E_\b- q^{(\a,\b)} E_\b E_\a = \sum
c_{\gamma,n}(q) E_{\gamma_1}^{n_1} \dots E_{\gamma_k}^{n_k} 
\ee
where $c_{\gamma,n}(q) \in \BQ(q)$, and the sum involves tuples
$(\gamma_1,\dots,\gamma_k)$ and nonnegative integers
$(n_1,\dots,n_k) \in \BZ_{\geq 0}^k$
that satisfy
\be
\label{LScond}
\a < \gamma_1 < \dots < \gamma_k < \b, \qquad
\a+\b = n_1 \gamma_1 + \dots + n_k \gamma_k \,.
\ee
The above formula holds for quantum Lie superalgebras, see~\cite[Lem.5.2.1]{Yamane}.

For type I superalgebras, when $\a$ is even and $\b$ is odd (and either $\a<\b$ or
$\b <\a$), it follows that there is a unique $\gamma_i$ which is odd, and all
others are even. Moreover, since $E_{\text{odd}}^2=0$, we necessarily have $n_i=1$. 
In other words, we have
\be
\label{LS2}
E_\a E_\b- q^{(\a,\b)} E_\b E_\a = \sum
c_{\gamma,n}(q) E_{\gamma_1}^{n_1} \dots E_{\gamma_{i-1}}^{n_{i-1}} 
E_{\gamma_i} E_{\gamma_{i+1}}^{n_{i+1}} E_{\gamma_k}^{n_k}, \qquad 
(\gamma_i = \text{odd}, \,\, \gamma_{\neq i} = \text{even} )
\ee
with $\gamma_j$ still satisfying~\eqref{LScond}.
But in fact, using the classification of Lie superalgebras of type I, the above 
formula simplifies further. We discuss this explicitly.

We start with $\fsl(m|n)$. Recall its roots from Sectiion~\ref{sub.roots}.
Abbreviating $E_{\a_{ij}}$ by $E_{ij}$ suppose that $E_{ij}$ is even and $E_{kl}$
is odd. Then, the Levendorski--Soibelman formula of
the $q$-commutator of $E_{ij}$ and $E_{kl}$ takes the form: 

\be
\begin{aligned}
i < j = k \leq m < l & \,\, : \,\, E_{ij} E_{jl}-q^{-1} E_{jl} E_{ij} = E_{il} \\
k \leq m < l = i < j & \,\, : \,\, E_{ij} E_{ki} - q E_{ki} E_{ij} = -q E_{ki} \\
\text{otherwise} & \,\, : \,\, E_{ij} E_{kl} - q^{(\a_{ij},\a_{kl})} E_{kl} E_{ij} = 0 \,.
\end{aligned}
\ee

See also~\cite[Lemma, p.2555]{Hakobyan}. In other words,

\begin{corollary}
\label{cor.slmn}
The $q$-commutator of an even $E_\a$ with an odd $E_\b$ is either
0 or a multiple of the odd $E_{\a+\b}$.
\end{corollary}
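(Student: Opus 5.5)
The statement is Corollary~\ref{cor.slmn}: for $\fsl(m|n)$, the $q$-commutator of an even root vector $E_\a$ with an odd root vector $E_\b$ is either $0$ or a scalar multiple of the odd root vector $E_{\a+\b}$. My plan is to read this off from the three-case display above, which gives the Levendorski--Soibelman formula~\eqref{LS} specialized to $\fsl(m|n)$. What needs checking is that these three cases exhaust all pairs of an even $E_{ij}$ and an odd $E_{kl}$. I also need to check that the nonzero right-hand sides are odd root vectors indexed by $\a+\b$, consistent with~\eqref{LScond}.

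\textbf{Step 1: the roots and the possible sums.} Write $\a=\a_{ij}$ even and $\b=\a_{kl}$ odd, so $k\le m<l$. By the description in Section~\ref{sub.roots}, there are two possibilities for the even root.
\begin{itemize}
\item If $i<j\le m$, then $\a+\b=\ve_i-\ve_j+\ve_k-\delta_{l-m}$. This is a root exactly when $j=k$, in which case $\a+\b=\a_{il}$, which is odd.
\item If $m<i<j$, then $\a+\b=\ve_k-\delta_{l-m}+\delta_{i-m}-\delta_{j-m}$. This is a root exactly when $l=i$, in which case $\a+\b=\a_{kj}$, again odd.
\end{itemize}
In all other configurations $\a+\b$ is not a root.

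\textbf{Step 2: apply~\eqref{LS2} and~\eqref{LScond}.} The right-hand side of~\eqref{LS2} is a sum of monomials in root vectors $E_{\gamma_1}^{n_1}\cdots E_{\gamma_k}^{n_k}$ with exactly one odd $\gamma_i$ (and $n_i=1$). Each monomial has total weight $\a+\b$, and the $\gamma$'s lie strictly between $\a$ and $\b$ in the lexicographic convex order.

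Weight bookkeeping then shows that the only possible monomial is the single factor $E_{\a+\b}$:
\begin{itemize}
\item Each $\gamma$ is a positive root $\ve_p-\ve_q$ (with $p<q$), $\delta_p-\delta_q$ (with $p<q$), or $\ve_p-\delta_q$.
\item The total weight $\a+\b$ has exactly one $+\ve$ coefficient and one $-\delta$ coefficient, and all other coefficients vanish.
\item In the first case of Step 1, any decomposition of $\a+\b=\ve_i-\delta_{l-m}$ other than the trivial one splits it as an even root plus an odd root, namely $\ve_i-\ve_p$ and $\ve_p-\delta_{l-m}$ with $i<p$. The interval condition $\a_{ij}<\gamma_1<\dots<\a_{jl}$ in the lexicographic order then forces $p=j$. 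But that recovers $\a$ and $\b$ themselves, which is excluded because the $\gamma$'s lie strictly between them.
\item The second case, $\a+\b=\ve_k-\delta_{j-m}$, is analogous; the order there is reversed, since $\a_{kl}<\a_{lj}$.
\end{itemize}
So the right-hand side is $c\,E_{\a+\b}$ when $\a+\b$ is a root, and $0$ otherwise. This matches the displayed formulas, up to the sign and normalization conventions of Yamane's root vectors, which we take as given~\cite[Lemma, p.2555]{Hakobyan}.

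\textbf{The main obstacle.} The hard part is the careful verification that no nontrivial ordered decomposition satisfies~\eqref{LScond}. This requires tracking the lexicographic convex order precisely. Alternatively, one can simply cite the explicit commutation relations of Hakobyan--Yamane, in which case the corollary is immediate from the display.
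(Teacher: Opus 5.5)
\paragraph{Verdict.} Your plan is the paper's own: read the corollary off the three-line display that specializes \eqref{LS} to $\mathfrak{sl}(m|n)$. There is a genuine gap, though, and the corollary as literally stated appears to be false.

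\paragraph{The gap.} It is the last sentence of your Step~2, ``$c\,E_{\a+\b}$ when $\a+\b$ is a root, and $0$ otherwise''. Your weight bookkeeping only treats the two cases where $\a+\b$ is a root. When $\a+\b$ is not a root, \eqref{LScond} does not force vanishing. For a crossing pair $i<k<j\le m<l$, the lexicographic order gives
\[
\a_{ij}<\a_{il}<\a_{kj}<\a_{kl},\qquad \a_{il}+\a_{kj}=\a_{ij}+\a_{kl},
\]
so an odd-times-even term $E_{il}E_{kj}$ is allowed. The same happens with $E_{kj}E_{il}$ for $k\le m<i<l<j$.

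\paragraph{The term is really there.} Work in $U_q(\mathfrak{sl}(3|1))$ with the lex-order root vectors, which are forced up to scalars:
\[
E_{13}=E_1E_2-q^{-1}E_2E_1,\qquad E_{24}=E_2E_3-q^{-1}E_3E_2,\qquad E_{14}=E_{13}E_3-q^{-1}E_3E_{13}=E_1E_{24}-q^{-1}E_{24}E_1 .
\]
These agree with the first line of the display. Use only $E_1E_3=E_3E_1$ and the $q$-Serre consequences $E_{13}E_2=qE_2E_{13}$ and $E_2E_{24}=qE_{24}E_2$. Setting $C:=E_{13}E_{24}-E_{24}E_{13}$, a direct computation gives
\[
C=qE_2E_{14}-q^{-1}E_{14}E_2,\qquad E_{14}E_2=q^{-1}C+q^{-2}E_2E_{14}.
\]
Hence $E_2E_{14}=E_{14}E_2$, and
\[
E_{13}E_{24}-q^{(\a_{13},\a_{24})}E_{24}E_{13}=C=(q-q^{-1})E_{14}E_{23}.
\]
This is a nonzero PBW monomial, although $(\a_{13},\a_{24})=0$ and $\a_{13}+\a_{24}$ is not a root. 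So the ``otherwise $=0$'' line of the display is wrong for crossing pairs. The corollary therefore fails for $\mathfrak{sl}(m|n)$ whenever $m\ge 3$ (the same computation with $q\mapsto q^{-1}$ handles $n\ge 3$). Neither your bookkeeping nor the citation can close this step.

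\paragraph{What is true.} If you run your interval analysis also in the non-root case, \eqref{LS} yields a weaker but correct statement. The $q$-commutator is:
\begin{enumerate}
\item $c\,E_{\a+\b}$ if $\a+\b$ is a root;
\item $c\,E_{\mathrm{odd}}E_{\mathrm{even}}$ for crossing pairs;
\item $0$ otherwise.
\end{enumerate}
In every case each monomial has exactly one odd factor, as in \eqref{LS2}. This is all that the proof of Lemma~\ref{lemma: PBW on V(n,a)} uses, since it explicitly allows ``a multiple of an even times some $F_{\text{odd}}$''.

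\paragraph{Smaller points.} In the root case your list of decompositions omits chains such as $\a_{iq}+\a_{ql}$ with $m<q<l$, and longer chains. The same interval argument disposes of them: the first factor must be $\a_{ip}$ with $p>j$, and then no further factor can lie strictly below $\a_{jl}$. Also, the right-hand side of the display's second line must be a multiple of $E_{kj}$, not $E_{ki}$, by weight. Your Step~1 tacitly assumes this correction.
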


Next, we discuss the Lie superalgebra $\osp(2|2n)$. Recall its roots from
Section~\ref{sub.roots}. The Levendorski--Soibelman formula of a $q$-commutation of 
$E_\a$ and $E_\b$ for even $\a$ and odd $\b$ takes the form: 

\begin{itemize}
\item[(1)]
  If $(\a,\b)=(\delta_i-\delta_j,\ve_1+\delta_i)$ 
  or $(\a,\b)=(\delta_i-\delta_j,\ve_1-\delta_j)$
  then
  $E_\a E_\b- q^{(\a,\b)} E_\b E_\a = E_{\b-\a}$.
\item[(2)]
  If $\a=\delta_i+\delta_j$ and $\b=\ve_1-\delta_j$, then
  $E_\a E_\b- q E_\b E_\a = E_{\a+\b}$.
\item[(3)]
 If $\a=2 \delta_j$ and $\b=\ve_1-\delta_j$, then
$E_\a E_\b- q^{2} E_\b E_\a = (q+q^{-1}) E_{\a+\b}$.
\item[(4)]
  In all other cases, 
  $E_\a E_\b- q^{(\a,\b)} E_\b E_\a = 0$.  
\end{itemize}

In other words, 

\begin{corollary}
\label{cor.osp}
The $q$-commutation of $E_\a$ and $E_\b$ for
even $\a$ and odd $\b$ is either zero, or a multiple of some $E_{\text{odd}}$. 
\end{corollary}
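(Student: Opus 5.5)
The plan is to verify Corollary~\ref{cor.osp} case by case from the Levendorski--Soibelman formula~\eqref{LS2}, using the explicit list of positive roots of $\osp(2|2n)$ and the convex order of Section~\ref{sub.roots}. By~\eqref{LS2}, for $\a$ even and $\b$ odd, the $q$-commutator $E_\a E_\b - q^{(\a,\b)}E_\b E_\a$ is a linear combination of ordered monomials $E_{\gamma_1}^{n_1}\cdots E_{\gamma_k}^{n_k}$. In each monomial exactly one $\gamma_i$ is odd, and it occurs with $n_i=1$. The roots $\gamma_j$ lie strictly between $\a$ and $\b$ in the convex order, and $\sum n_j\gamma_j=\a+\b$. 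So it suffices to show that any such monomial has a single factor, i.e.\ it is $E_{\gamma}$ with $\gamma=\a+\b$ an odd root.

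The key observation is a weight count. Every odd positive root has the form $\ve_1\pm\d_k$, so its $\ve_1$-coefficient is $1$, while every even root has $\ve_1$-coefficient $0$. Hence $\a+\b=\ve_1+(\text{$\d$-part})$, and in a decomposition $\a+\b=\gamma_i+\sum_{j\ne i}n_j\gamma_j$ the even part $\sum_{j\neq i}n_j\gamma_j=\a+\b-\gamma_i$ must equal $(\a+\b)-(\ve_1\pm\d_k)$, a vector in the $\d$-span. I then enumerate. The pair $(\a,\b)$ runs over $\a\in\{\d_i\pm\d_j,2\d_i\}$ and $\b=\ve_1\pm\d_k$. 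When $\a+\b$ is not a root, and is not a sum of one odd root and even roots lying strictly between $\a$ and $\b$, the commutator vanishes; this gives case (4). When $\a+\b$ is itself an odd root, I check using the convex order that no decomposition with extra even factors strictly between $\a$ and $\b$ exists, so only $E_{\a+\b}$ survives; this gives cases (2) and (3). For case (1), the claimed output $E_{\b-\a}$ reflects the sign convention $E_{\d_i-\d_j}$ versus the negative-type root, and one matches it with $\a+\b$ computed in the paper's conventions. Either way the output is a single odd root vector. The precise coefficients, such as $1$ or $q+q^{-1}$, are not needed for the corollary; only the shape of the monomials matters.

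The main obstacle is excluding the monomials of the form $E_{\gamma'}E_{\text{odd}}$ or $E_{\text{odd}}E_{\gamma'}$ with $\gamma'$ even. For instance, one might have $\a+\b=\gamma_{\rm odd}+\gamma'$ with both $\gamma_{\rm odd}$ and $\gamma'$ in the open interval $(\a,\b)$. I would first try the crude bound from the $\d$-coefficients. The $\d$-part of $\a+\b$ has total $\ell^1$-size at most $3$. An odd root contributes $\pm\d_k$, and an even root contributes a vector of size $2$. This leaves only a few possible splittings, and for each I check directly against the explicit convex order that one of the two roots falls outside $(\a,\b)$. If this bookkeeping becomes messy, the fallback is to compute the commutators directly from Yamane's inductive definition of root vectors, which is how cases (1)--(4) are stated. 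Once the list (1)--(4) is established, the corollary is immediate: each right-hand side is either $0$ or a scalar multiple of a single odd root vector.
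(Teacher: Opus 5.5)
\textbf{The gap.} The step you flag as the main obstacle cannot be completed. Once $n\ge 3$, it is false that every splitting $\alpha+\beta=\gamma_{\rm odd}+\gamma'$ has one of its two roots outside the open interval $(\alpha,\beta)$.

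Convexity fixes enough of the order to see this. Since $(\delta_i-\delta_j)+(\varepsilon_1-\delta_i)=\varepsilon_1-\delta_j$, and $\varepsilon_1-\delta_j<\varepsilon_1-\delta_i$ in the displayed odd block, every $\delta_i-\delta_j$ precedes $\varepsilon_1-\delta_j$. The displayed $\delta_1-\delta_2<\delta_1-\delta_3$ then forces $\delta_1-\delta_3<\delta_2-\delta_3$. Hence
\[
\delta_1-\delta_3<\delta_2-\delta_3<\varepsilon_1+\delta_1<\varepsilon_1+\delta_2 .
\]
Take $\alpha=\delta_1-\delta_3$ and $\beta=\varepsilon_1+\delta_2$. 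Then $(\alpha,\beta)=0$ and $\alpha+\beta$ is not a root. Yet $\alpha+\beta=(\delta_2-\delta_3)+(\varepsilon_1+\delta_1)$, with both summands strictly between $\alpha$ and $\beta$. So \eqref{LS2} allows the monomial $E_{\delta_2-\delta_3}E_{\varepsilon_1+\delta_1}$, and no weight or order bookkeeping can exclude it. Your $\ell^1$ count does not bound the number of factors either, because roots of the form $\delta_p-\delta_q$ telescope.

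\textbf{The term is actually present.} Put $x=\delta_1-\delta_2$ and $y=\delta_2-\delta_3$. For the displayed order, each of the following relevant LS intervals admits no other monomial. This gives:
\begin{itemize}
\item $E_\alpha=\kappa(E_xE_y-qE_yE_x)$ with $\kappa\neq 0$;
\item $E_yE_\beta=q^{-1}E_\beta E_y$;
\item $E_yE_{\varepsilon_1+\delta_1}=E_{\varepsilon_1+\delta_1}E_y$;
\item $E_xE_\beta-qE_\beta E_x=c'E_{\varepsilon_1+\delta_1}$, with $c'\neq0$ because at $q=1$ this is the nonzero bracket $[e_{\delta_1-\delta_2},e_{\varepsilon_1+\delta_2}]$.
\end{itemize}
Substituting gives
\[
E_\alpha E_\beta-E_\beta E_\alpha=\kappa c'(q^{-1}-q)\,E_{\delta_2-\delta_3}E_{\varepsilon_1+\delta_1}\neq 0 .
\]
This is a super analogue of the quantum-matrix relation $ad-da=(q-q^{-1})bc$.

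\textbf{Consequences.} The statement fails for $\osp(2|2n)$ with $n\ge 3$, and so does case (4) of the list it is read off from. The same computation applies in $\fsl(m|n)$ to $E_{ij},E_{kl}$ with $i<k<j\le m<l$. Your fallback of computing from Yamane's definitions would expose this rather than confirm (1)--(4).

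The paper's own argument is simply to read the corollary off that list, so it has the same hole. What does hold is the weaker form \eqref{LS2}: one odd root vector times even ones. That is all the proof of Lemma~\ref{lemma: PBW on V(n,a)} actually uses.

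Separately, case (1) as printed outputs weight $\beta-\alpha\neq\alpha+\beta$, which is impossible on weight grounds. It is a typo with $i$ and $j$ interchanged, not a sign convention as you suggest.
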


\subsection{Kac modules} 
\label{sub.kac}

As mentioned above, we care about the cases $\gg=\sl(m|n)$ ($m\neq n$) and
$\gg=\osp(2|2n)$ since $\gg_{\ov{0}}=\gg_0\oplus \C$ where $\gg_0$ is a semisimple
Lie algebra (in the usual sense). This fact is what permits to construct simple
modules of arbitrary complex highest weight as we now explain.
\medskip

Let $V$ be a finite-dimensional simple module over $U_q(\gg_0)$. Then we know that
$V$ is a highest weight module and the weights are all integers, let $v_0$ be a
highest weight vector. 
Moreover, $V$ has a basis of vectors of the form $F^Jv_0$ for some finite set of even
vectors $F^J$. The weights of $v_0$ are parametrized by a discrete parameter $\nn$,
so we denote $V=V(\nn)$. Given a parameter $a\in \C$ we make $V$ into a
$U_q(\gg_{\ov{0}})$-module by setting 
$$ 
K_mv_0=q^{a}v_0
$$
and extending $K_m$ to all of $V$ using $K_mF_j=q^{-a_{mj}}F_jK_m$ (recall that $d_m=1$ 
so $q_m=q$). Let $\Uqp$ be the subalgebra of $\Uqg$ generated by $U_q(\gg_{\ov{0}})$ and 
$E_m$. Now make $V$ into a $\Uqp$-module by setting 
$$
E_mv_0=0
$$
which forces $E_m=0$ over all of $V$ since $[E_m,F_i]=0$ for all even $F_i$. 
Then we obtain a $\Uqg$-module by setting 
$$
\Vnna=\Uqg\ot_{\Uqp}V(\nn).
$$

\begin{lemma}(\cite{Kac:reps})
\label{lem.simple}
When $(\nn,a)$ is typical (in particular if $a\in \C\setminus\Z$), the module
$\Vnna$ is simple.
\end{lemma}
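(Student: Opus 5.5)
The plan follows Kac's classical argument, transported to the quantum setting. Let $\Uqp$ be as above, and let $U_q(\gg)^-_{\ov 1}$ denote the span of the products $F_{\b_1}\cdots F_{\b_k}$ with $\b_1<\dots<\b_k$ odd roots. By the PBW theorem of Section~\ref{sub.PBW}, together with Corollaries~\ref{cor.slmn} and~\ref{cor.osp} applied to negative root vectors, this span is stable under the adjoint action of $U_q(\gg_{\ov 0})$. We therefore get a vector space isomorphism
\[
\Vnna\cong \Lambda_q(\gg_{-1})\ot V(\nn).
\]
In particular $\dim\Vnna=2^{|\Deodd|}\dim V(\nn)$, with basis $F^{J}F^{J'}v_0$, where $J$ is supported on the odd roots with values in $\{0,1\}$ and $F^{J'}v_0$ runs over a basis of $V(\nn)$. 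The key structural point is the $\Z$-grading by the number of odd factors, that is, by the coefficient of $\a_m$ in the weight. The top degree piece $\Vnna_{top}=F_{\b_1}\cdots F_{\b_N}V(\nn)$, with $N=|\Deodd|$, is a $U_q(\gg_{\ov 0})$-module isomorphic to $V(\nn)$ twisted by the weight $-\sum_{\b\in\Deodd}\b$. It is annihilated by all odd $F_\b$, since each squares to zero. Every nonzero submodule $W$ must contain a vector killed by all $E_{\b}$ with $\b$ odd. Simplicity will follow from two claims:
\begin{enumerate}
\item every nonzero submodule meets $\Vnna_{top}$;
\item $\Vnna_{top}$ generates $\Vnna$ whenever $(\nn,a)$ is typical.
\end{enumerate}
Since $V(\nn)$ is a simple $U_q(\gg_{\ov 0})$-module, (2) reduces to showing that the single element $E_{\b_N}\cdots E_{\b_1}F_{\b_1}\cdots F_{\b_N}$ acts on $v_0$ by a nonzero scalar, up to even lower terms that preserve $V(\nn)$.

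For (1), take $0\neq w\in W$ and expand it in the basis above. Choose a term of minimal odd degree, and apply suitable odd $F_\b$'s. Ordering the odd roots so that the relevant $q$-commutators are triangular, by Corollaries~\ref{cor.slmn}/\ref{cor.osp}, we can raise this term to top degree without cancellation against the other terms. This produces a nonzero element of $W\cap\Vnna_{top}$; this is the standard argument that the socle of a Kac module is its top-degree part.

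For (2), the main obstacle is computing the scalar. I would commute the $E_\b$'s past the $F_\b$'s one at a time using $[E_\b,F_\b]=\frac{K_\b-K_\b^{-1}}{q-q^{-1}}+(\text{terms in }\UQ^-\UQ^0\UQ^+)$. The lower-order terms either kill $v_0$ or reduce to terms of smaller odd degree, and these must be shown to vanish after the projection onto $v_0$; I would handle this by induction along the convex order. The expected outcome is the quantum Kac determinant
\[
\prod_{\b\in\Deodd}\bigl[(\Lambda+\rho,\b)\bigr]_q ,
\]
where $\Lambda$ is the highest weight determined by $(\nn,a)$. Typicality means exactly that $(\Lambda+\rho,\b)\neq0$ for every odd $\b$, so this product is nonzero for generic $q$. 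Since $\b=\b'+\a_m$ with $\b'$ even and $d_m=1$, each factor is of the form $[a+c_\b(\nn)]_q$ with $c_\b(\nn)\in\Z$, hence nonzero when $a\notin\Z$. This shows that any submodule containing $\Vnna_{top}$ contains $v_0$, hence equals $\Vnna$.

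Alternatively, if the triangularity in (1) proves awkward, I would use a deformation argument. A nonzero Shapovalov-type pairing between $\Vnna$ and the analogous Kac module induced from lowest weights is given by the same determinant. Its nondegeneracy for typical $(\nn,a)$ gives simplicity directly, because any proper submodule would lie in the radical of the pairing.
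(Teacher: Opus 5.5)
The paper gives no argument for this lemma; it only cites Kac's classical theorem. Your reduction is the standard Kac strategy, and it is logically sound. The $\Lambda$-weight space of $\Vnna$ is $\C v_0$, so $E_{\b_N}\cdots E_{\b_1}F_{\b_1}\cdots F_{\b_N}v_0=c\,v_0$ automatically, and (1) together with $c\neq0$ gives simplicity.

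\textbf{The gap.} The whole content of the lemma, namely $c\neq 0$ for typical $(\nn,a)$, is not proved. You only announce the expected value. The mechanism you offer for it is false as stated: you claim that the lower-order terms produced by commuting the $E_\b$ past the $F$'s vanish after projection to $v_0$. They do not, already in the classical limit. Take $\fsl(1|2)$ with the paper's lexicographic order, so $\b_1=\a_1=\ve_1-\d_1<\b_2=\a_1+\a_2=\ve_1-\d_2$. Use matrix units of $\mathfrak{gl}(1|2)$, with $\lambda_i$ the eigenvalue of $e_{ii}$ on $v_0$. Then
\[
e_{12}e_{21}e_{31}v_0=(\lambda_1+\lambda_2-1)\,e_{31}v_0-e_{21}e_{32}v_0,\qquad
e_{13}e_{21}e_{32}v_0=(\lambda_2-\lambda_3)\,v_0,
\]
hence
\[
e_{13}e_{12}e_{21}e_{31}v_0=\bigl[(\lambda_1+\lambda_2-1)(\lambda_1+\lambda_3)-(\lambda_2-\lambda_3)\bigr]v_0=(\lambda_1+\lambda_2)(\lambda_1+\lambda_3-1)\,v_0 .
\]
The lower-order term comes from $[E_{\b_1},F_{\b_2}]\propto F_{\a_2}$, and it survives. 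The main term alone has the wrong zero locus; the typicality factors $(\Lambda+\rho,\b)$ appear only once the lower-order term is included.

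\textbf{Why reordering does not immediately fix it.} You could instead order the odd roots by decreasing height, so that each cross commutator $[E_{\b_i},F_{\b_j}]$ with $j>i$ is a positive even root vector. Classically the cross terms then die, since they either reach $v_0$ or produce a repeated odd factor. In the example, the reversed order gives $(\lambda_1+\lambda_3-1)(\lambda_1+\lambda_2)$ directly. In $U_q(\gg)$, however, odd negative root vectors $q$-anticommute only up to Levendorskii--Soibelman corrections. For instance, for $\fsl(2|n)$ with $n\geq 3$, a multiple of $F_{\a_{14}}F_{\a_{23}}$ can occur in the $q$-anticommutator of $F_{\a_{13}}$ and $F_{\a_{24}}$. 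So a monomial with a repeated odd factor need not vanish. You would also still have to check the identity $(\rho,\b_i)=-\sum_{j>i}(\b_j,\b_i)$, which is what turns the surviving product into $\prod_\b[(\Lambda+\rho,\b)]_q$.

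\textbf{A suggested route.} One route to close the gap is to transport Kac's own argument:
\begin{enumerate}
\item regard $c$ as a function of the highest weight; for fixed $\nn$ it is a Laurent polynomial in $q^{a}$ with exponents in $[-|\Deodd|,|\Deodd|]$;
\item show $c$ vanishes on each atypical hyperplane: for $\b=\a_m$ because $F_mv_0$ is then singular, and for the other odd roots via a Harish-Chandra type $W_0$-symmetry, using that $W_0$ is transitive on $\Deodd$;
\item compare leading terms.
\end{enumerate}

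\textbf{Two smaller points.} First, step (1) needs the multiplication pairing between odd parts of degrees $k$ and $N-k$, into the one-dimensional top degree, to be perfect. That is a statement about odd--odd relations, and Corollaries~\ref{cor.slmn} and~\ref{cor.osp} do not supply it, since they only treat even--odd pairs. Second, the Shapovalov alternative does not bypass anything. The radical of an invariant pairing is a submodule, so by (1) its nondegeneracy is equivalent to the same condition $c\neq0$.
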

It is well-known that $V(\nn)$ has a basis of the form $\{F^Bv_0\}_{B\in \Cnn}$ for
some set $\Cnn$ of functions $B:\Deev\to \N_{\geq 0}$.

\begin{lemma}
\label{lemma: PBW on V(n,a)}
The module $\Vnna$ has a PBW basis of the form $F^AF^Bv_0$, where $F^A$ is a product 
of odd roots (so $A:\Deodd\to\{0,1\}$) and $B\in \Cnn$.
\end{lemma}

\begin{proof}
First, we know $\dim(\Vnna)=2^{\phip} \dim(V(\nn))$. 
So, it suffices to show that $F^A F^B$ with $A$ and $B$ as above spans $\Vnna$.
For this, choose a convex order on $\Delta^+$. Then, $\prod_{\a \in \Delta^+} F_\a^{n_a}$ is a PBW basis of
$\UQ^-$ where $n_\a \in \BZ_{\geq 0}$ if $\a$ is even and $n_\a \in \{0,1\}$ if
$\a$ is odd. Moreover, $F_\a$ for even (resp., odd) $\a$ is a sum of a product 
of even $F_{\a_i}$ (resp., a product of even $F_{\a_i}$ and $F_m$ once). 
By the induced module definition, we get a PBW basis on $\Vnna$ (but not of the desired form). 

Let $W$ denote the subspace of $\Vnna$ spanned by $F^A F^B v_0$ for $A$ and $B$
as in the statement of the lemma. We claim that for every even root $\a$, we have
$F_\a W \subset W$. This follows by induction on $|A|$ and from the $q$-commutation
relations of Corollaries~\ref{cor.slmn} and~\ref{cor.osp}, which imply that
the $q$-commutator of $F_\a$ and $F_{\text{odd}}$ is either 0, or a multiple of
some $F_{\text{odd}}$, or a multiple of an even times some $F_{\text{odd}}$.

The above claim, together with a PBW basis of $\Vnna$ coming from a convex order
implies that $W=\Vnna$. It follows that $\Vnna$ is spanned by the vectors
$F^A F^B v_0$ which are as many as its dimension. Hence, the said vectors form
a basis of $\Vnna$.
\end{proof}

\begin{remark}
    In fact, any PBW basis of $\Vnna$ is good enough to make the argument of Proposition \ref{prop.algb} work, but we prefer to have a basis of the above form.
\end{remark}

\subsection{The braiding} 

By a weight module over $U_q(\gg)$ we mean a finite-dimensional module over which the
action of $\UQ^0$ is diagonalizable and eigenvalues of the $K_i$'s are all powers of
$q$. We denote by $\CC$ the category of weight modules over $U_q(\gg)$. 
\medskip

Denote by $F_{\Phi^+}$ the set of functions $I:\Phi^+\to \N_{\geq 0}$ 
such that $I(\a)\in \{0,1\}$ for every $\a\in \Deodd$. 

\begin{proposition}\cite{Yamane}
For any $U,V$ in $\CC$, the braiding $c_{U,V}:U\ot V\to V\ot U$ is given by 
\be
\label{cuv}
c_{U,V}(u\ot v)=\tau_{U,V}\left(\HH (\Theta \cdot (u\ot v))\right)
\ee
where 
\begin{itemize}
\item
$\tau_{U,V}(x\ot y)=y\ot x$ is the usual transposition, 
\item 
$\HH(x\ot y)=q^{(\b,\c)}x\ot y$ if $x\in U,y\in V$ have weights $\b,\c\in L_R$ 
respectively is defined by
\be
\label{Hpart}
\HH=q^{\sum_{i,j}a'_{ij}H_i\ot H_j}, \qquad 
\ee
where $(a'_{ij})$ denotes the inverse matrix of $(a_{ij}d_i)$ and where $H_i$ 
is defined by 
$K_i=q^{H_i}$ for each $i=1,\dots,r$,
\item  
and 
$$
\Theta=\sum_{I\in F_{\Phi^+}}c_I\cdot E^I\ot F^I
$$
is the quasi-R-matrix for some coefficients $c_I\in \Q(q)$.
\end{itemize}
\end{proposition}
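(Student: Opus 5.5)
The plan is to derive the formula from the universal $R$-matrix of $U_q(\gg)$ constructed by Yamane~\cite{Yamane} via the quantum double, and then to check that on weight modules it specializes to \eqref{cuv}. There are three steps: the Cartan part $\HH$, the quasi-$R$-matrix $\Theta$, and the verification that the composite is a braiding.

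\textbf{The Cartan part $\HH$.} On a weight module every $K_i$ acts diagonally by powers of $q$, so $H_i$ with $K_i=q^{H_i}$ makes sense as a diagonal operator. Hence $\HH=q^{\sum_{i,j}a'_{ij}H_i\ot H_j}$ is a well-defined diagonal operator on $U\ot V$. Suppose $x\in U$ and $y\in V$ have weights $\b,\c\in L_R$. Then $H_i x=(\b,\a_i)x$ and $H_j y=(\c,\a_j)y$, using $(\a_i,\a_j)=a_{ij}d_i$. Since $(a'_{ij})$ is the inverse of the Gram matrix $((\a_i,\a_j))$, the exponent $\sum_{i,j}a'_{ij}(\b,\a_i)(\c,\a_j)$ equals $(\b,\c)$. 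This is just the expansion of $\b$ and $\c$ in the basis of simple roots, and it gives \eqref{Hpart}. Since $d_m=1$, the same formula extends to the non-integral weights of $\Vnna$ once $(\cdot,\cdot)$ is extended bilinearly.

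\textbf{The quasi-$R$-matrix $\Theta$.} I would use the Drinfeld--Yamane factorization along the chosen convex order,
\[
\Theta=\prod_{\a\in\Phi^+}\Theta_\a,
\]
with the following factors:
\begin{itemize}
\item for $\a$ even, $\Theta_\a=\exp_{q_\a}\!\big(c_\a E_\a\ot F_\a\big)$ is a $q$-exponential series;
\item for $\a$ odd, $\Theta_\a=1\ot1+c_\a E_\a\ot F_\a$, because $E_\a^2=F_\a^2=0$ truncates the exponential.
\end{itemize}
Expanding the ordered product gives a sum of terms $c_I\,E^I\ot F^I$, where each even exponent ranges over $\N_{\geq0}$ and each odd exponent lies in $\{0,1\}$. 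This is exactly the index set $F_{\Phi^+}$, and since $c_I$ is a product of the $c_\a$'s and $q$-factorials, $c_I\in\Q(q)$. On finite-dimensional weight modules only finitely many terms act nontrivially, because $E^I$ raises weight by $d(I)$. Thus $\Theta$ acts as a finite operator.

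\textbf{Braiding property.} It remains to see that $c_{U,V}$ is a natural $U_q(\gg)$-isomorphism satisfying the hexagon axioms. Naturality and invertibility are clear from the explicit form: $\Theta$ is unipotent, and the other two factors are invertible. The substance is the intertwining identity $\Delta(x)\,\HH\Theta=\HH\Theta\,\Delta^{\mathrm{op}}(x)$ in the appropriate completion, together with $(\Delta\ot\id)R=R_{13}R_{23}$ and $(\id\ot\Delta)R=R_{13}R_{12}$.
\begin{itemize}
\item For the intertwining identity it suffices to check the generators $E_i,F_i,K_i$. For $K_i$ it is weight bookkeeping. For $E_i$ and $F_i$ it reduces, by the usual Lusztig-type argument, to commutation relations of $\Theta$ with $E_i\ot K_i$ and $1\ot E_i$. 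These in turn follow from the Levendorskii--Soibelman relations \eqref{LS} and from the nondegeneracy of the Hopf pairing between $\UQ^{\geq0}$ and $\UQ^{\leq0}$, in which the PBW monomials $E^I$ and $F^J$ are orthogonal unless $I=J$. Nondegeneracy holds because $\gg$ has a nondegenerate invariant supertrace form.
\item The hexagon identities follow from the quantum double structure, that is, from the identification of $\Theta$ with the canonical element of the pairing. In this sense the coefficients $c_I$ are the inverses of the pairing values $\langle E^I,F^I\rangle$.
\end{itemize}

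I expect the main obstacle to be the super signs. The transposition $\tau_{U,V}$ must be the super flip $x\ot y\mapsto(-1)^{|x||y|}y\ot x$, or equivalently the signs must be absorbed into the $c_I$. The sign conventions for $\Delta^{\mathrm{op}}$ and for the pairing of odd root vectors must then be matched consistently. I would handle this by following Yamane's conventions verbatim, checking the rank-one odd case $U_q(\mathfrak{gl}(1|1))$ by hand, and then invoking \cite{Yamane} for the general statement.
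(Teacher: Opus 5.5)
Your route is essentially the paper's: the paper gives no argument beyond citing Yamane, and your outline (quantum double, orthogonal PBW bases, ordered product formula for $\Theta$) reconstructs that source. Your check that $\sum_{i,j}a'_{ij}(\beta,\alpha_i)(\gamma,\alpha_j)=(\beta,\gamma)$, using that $(a'_{ij})$ inverts the symmetric Gram matrix, correctly verifies the Cartan factor, which the paper leaves implicit. Since everything substantive is deferred to Yamane, the conclusion stands. However, three details you sketch would fail if carried out literally.

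First, the intertwining identity is written backwards. For $c_{U,V}=\tau\circ\HH\Theta$ to be $U_q(\mathfrak{g})$-linear, you need $\HH\Theta\,\Delta(x)=\Delta^{\mathrm{op}}(x)\,\HH\Theta$; this is also the direction that matches the identities $(\Delta\otimes\mathrm{id})R=R_{13}R_{23}$ and $(\mathrm{id}\otimes\Delta)R=R_{13}R_{12}$ you list. Your version already fails for $x=E_i$, in the component that raises only the second factor. On $u\otimes v$ of weights $(\beta,\gamma)$, the operators $(1\otimes E_i)\HH$ and $\HH(K_i\otimes E_i)$ differ by $q^{2(\beta,\alpha_i)}$, whereas $\HH(1\otimes E_i)=(K_i\otimes E_i)\HH$ does hold.

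Second, the super signs cannot be absorbed into the scalars $c_I$. The super flip and the ordinary transposition differ by the diagonal operator $x\otimes y\mapsto(-1)^{|x||y|}$. On the term $E^Iu\otimes F^Iv$ this equals $(-1)^{(|I|+|u|)(|I|+|v|)}$, with $|I|$ the parity of $E^I$, so it depends on the parities of $u$ and $v$, not only on $I$. It can be placed in a diagonal factor next to $\HH$, but not into the $c_I$. This is harmless for the paper's later lemma on $X\otimes X$: on $X=V\otimes V^*$, the parity of a weight vector is the $\alpha_m$-coefficient of its weight mod $2$.

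Third, nondegeneracy of the pairing on $U^+\times U^-$ does not follow from the invariant supertrace form. That form only makes the Cartan part of the pairing nondegenerate, i.e.\ it gives invertibility of $(a_{ij}d_i)$. Nondegeneracy on $U^+\times U^-$ amounts to the defining relations containing all the extra super Serre relations. Establishing this, together with the PBW theorem and the orthogonality of PBW monomials, is precisely Yamane's contribution, so this step is a citation rather than a derivation.
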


Note that although $\Theta$ is an infinite sum, only finitely many terms act nonzero
in each of the weight modules considered in this paper.

\subsection{Pivotal and ribbon structure} 
\label{sub.piv}
The equation 
\be
S^2(E_i)=K_{\c}E_iK_{\c}^{-1}=q^{(\c,\a_i)}E_i = q^{(\a_i,\a_i)}E_i
\ee
for an element $\c\in L_R$ determines $\c$ uniquely since the Cartan matrix
is invertible. An explicit computation using the positive roots of $\fsl(m|n)$ 
and $\osp(2|2n)$ given in Section~\eqref{sub.PBW} reveals that 
\be
\label{rho}
\c = 2\rho, \qquad \rho=\rhoev-\rhoodd
\ee
satisfies the equation $(\c,\a_i)=(\a_i,\a_i)$ for all $i$ (see also~\cite[Lem.3.18]{AGP}
for $\fsl(m|n)$), where 
$$
2\rhoev=\sum_{\a\in\Deev}\a, \qquad 2\rhoodd=\sum_{\a\in\Deodd}\a.
$$
\medskip

Thanks to the pivotal structure, for every $V\in \CC$ there is an isomorphism 
\be
\label{jVdef}
j_V:V^{**}\to V, \qquad 
j_V^{-1}(v)(v')=v'(K_{2\rho}v).
\ee

The category $\CC$ is ribbon with the twists $\theta_V$ defined by
$$
\theta_V=(\id_V\ot\rev_V)(c_{V,V}\ot \id_{V^*})(\id_V\ot \lcoev_V).
$$
Here $\rev_V: V \ot V^* \to \BC$ is the right evaluation defined by 
$\rev_V(v\ot v')=v'(K_{2\rho}v)$ and $\lcoev_V: \BC\to  V \ot V^*$ is the 
left co-evaluation defined by $\lcoev(1)=\sum_i v_i \ot v_i^*$.

\medskip
For a module $V=V(\nn,a)$ the action of the twist can be computed from the above formula 
to obtain 
$$
\theta_{V}=q^{\sum_{i,j=1}^ra'_{ij}n_in_j+\sum_{i=1}^r k_in_i}\id_V
$$
where we set $n_m=a$ and the $k_i\in \Z, i=1,\dots, r$ are characterized by 
$2\rho=\sum_{i=1}^rk_i\a_i$. Note that this coefficient belongs to $\Q(q,q^a,q^{a^2})$.

\section{Link polynomials}

\subsection{Quantum invariants of links from $(1,1)$-tangles} 

We assume the reader is familiar with the Reshetikhin-Turaev invariants of framed, 
oriented tangles from ribbon tensor categories presented in~\cite{RT:ribbon,Tu:book}. 
If $T$ is a framed, oriented tangle, we denote by $F(T,V)$ the Reshetikhin-Turaev
invariant of 
$T$ in which every component is colored with a module $V$ in $\CC$. When discussing link
polynomials associated to representations of type I Lie superalgebras, there is a
modification of the Reshetikhin--Turaev functor, first introduced by Geer--Patureau 
in~\cite{Geer:multi1}, which amounts to cutting a component of a framed, oriented link, 
computing the invariant and then multiplying by a modified dimension (since the quantum 
dimension of $V(\nn,a)$ vanishes). Our invariants do not require modified
trace/dimension as we now explain. 
\medskip

Let $V=V(\nn,a)\in \CC$ for typical $(\nn,a)$. Let $\wt{L}$ be a framed, oriented link
in $S^3$ and let $\wt{L}_o$ be a $(1,1)$-tangle with closure $\wt{L}$. Then
$F(\wt{L}_o,V):V\to V$ must have the form $c_{\wt{L}_o}\cdot \id_V$ since $V$ is
simple, where $c_{\wt{L}_o}\in \Q(q,q^a,q^{a^2})$. 
It can be proved that $c_{\wt{L}_o}$ is an invariant of $\wt{L}$, that is, it is
independent of where the link was cut open. This follows from the results of
\cite{GPT:modified}: to get link invariants one typically needs to cut the link open
and multiply by the modified dimension of the color of the open component, but since
all components have the same color, this last step is unnecessary.
\medskip

Now, if $L$ is an unframed, oriented link, and $\lambda_{\nn,a}\in \Q(q,q^a,q^{a^2})$
is such that $\theta_{V(\nn,a)}=\lambda_{\nn,a}\cdot \id_{V(\nn,a)}$, then
$$
J^{\gg}_{L,V(\nn,a)}:=\lambda_{\nn,a}^{-w(\wt{L})}\cdot c_{\wt{L_o}}
$$
(where $c_{\wt{L}_o}$ was defined above) is an invariant of $L$ that belongs
to $\Q(q,q^a)$, and in fact, it is in $\Z[q^{\pm 1},q^{\pm a}]$. Here $\wt{L}$ denotes
$L$ with an arbitrary choice of framing and $w(\wt{L})$ denotes the writhe, that is,
the sum of signs over all crossings in a diagram of $\wt{L}$ (with blackboard framing).
Up to an overall power of $q^a$, this invariant is a Laurent polynomial in $q^{2a}$ so
we set $t=q^{2a}$ and denote the invariant by $J_{L,V}^{\gg}(q,t)$.
\medskip

\begin{remark}
\label{rem.int}
The proof of Theorem~\ref{thm.1} uses only the weaker property that
$J_{L,V}^{\gg}(q,t)$ lies in $\BQ(q)[q^{\pm a}]$, whereas it actually lies in the
ring $\BZ[q^{\pm 1},q^{\pm a}]$, as follows from adapting mutatis mutandis the
arguments of L\^{e}~\cite{Le:strong} from the case of simple Lie algebras to the
case of Lie superalgebras of type I.
\end{remark}

\begin{remark}
The invariants $J^{\gg}_{L,V(\nn,a)}$ are specializations of the multivariable
invariants of \cite{Geer:multi1} (after dividing by the modified dimension of
$V(\nn,a)$ and equating the variables corresponding to every component of the link).
\end{remark}

\subsection{Weyl symmetry} 

In this section we describe a Weyl-involution on the set of typical Kac modules,
which is used in the proof of Theorem~\ref{thm.1.links} only, and may be skipped for
the rest of the proofs. Let $\o:U_q(\gg)\to U_q(\gg)$ be the $\Q(q)$-linear
automorphism defined by
\be
\label{omegadef}
\o(E_i)=F_i, \qquad \o(F_i)=E_i, \qquad \o(K_i)=K_i^{-1} \,.
\ee
Then $\o(E_\a)=F_\a$ and $\o(F_\a)=E_\a$; see~\cite[Sec.4.6]{Jantzen}. 

Moreover, if $\nn$ is the highest weight of the module $V(\nn)$ of $\gg_0$, then
$\omega$ restricts to $U_q(\gg_0)$ and~\cite[Sec.5.16]{Jantzen}
\be
\label{oVn}
\o(V(\nn)) \simeq V(\nn)^* \simeq V(\nn') \,.
\ee
It follows by the induced map construction of the Kac module that
\be
\label{oVnn}
\o(V(\nn,a))\cong V(\nn',-a+l_{\nn})
\ee
for some integer $l_{\nn}\in \Z$.

\begin{proposition}(Symmetry)
\label{prop: symmetry property}
The link invariant $J_{L,V}^{\gg}$ satisfies the symmetry property 
$$
J^{\gg}_{L,V(\nn,a)}(t,q)=J^{\gg}_{L,V(\nn',a)}(q^{l_{\nn}}t^{-1},q).
$$
\end{proposition}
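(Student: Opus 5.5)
The plan is to transport the whole Reshetikhin--Turaev computation along the automorphism $\o$ of \eqref{omegadef}. The first step is to record how $\o$ interacts with the Hopf structure. On generators one checks directly that $\Delta\circ\o=(\o\ot\o)\circ\Delta^{\mathrm{op}}$, where $\Delta^{\mathrm{op}}$ is the super-opposite coproduct, using $\Delta(E_i)=E_i\ot K_i+1\ot E_i$ and $\Delta(F_i)=K_i^{-1}\ot F_i+F_i\ot 1$. Similarly, $\o$ intertwines $S$ with $S^{-1}$, up to the sign and $K$-conjugation conventions. Consequently, pulling back a module $W$ along $\o$ to obtain $W^{\o}$ gives a functor $\CC\to\CC$ that is monoidal into the category with reversed tensor product. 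It also sends the quasi-$R$-matrix $\Theta=\sum c_I E^I\ot F^I$ to $\sum c_I F^I\ot E^I$, using $\o(E_\a)=F_\a$, which is the flipped quasi-$R$-matrix. It sends $\HH$ to itself, since weights are negated on both factors. Finally, it sends $K_{2\rho}$ to $K_{2\rho}^{-1}$.

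The second step is to deduce the effect on tangle invariants. The functor $W\mapsto W^{\o}$, composed with a mirror or orientation reversal on diagrams, carries $c_{U,V}$ to $c_{V^{\o},U^{\o}}$ up to the flip, and carries (co)evaluations to (co)evaluations. From this I expect a statement of the form
$$F(T,V)^{\o}=F(T',V^{\o}),$$
where $T'$ is obtained from $T$ by a composite of mirror image and reversal of the order of strands (a rotation by $\pi$ of the diagram) with orientation possibly reversed. One must check that each elementary crossing maps to a crossing of the same sign. This is the delicate point, because both $\Delta^{\mathrm{op}}$ and the flipped $\Theta$ occur, and their effects should cancel. A rotation of a $(1,1)$-tangle in the plane about an axis preserves the isotopy class of the closure. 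Hence $c_{\wt L_o}(V)=c_{\wt L'_o}(V^{\o})$, where $\wt L'$ is isotopic to $\wt L$ possibly with all orientations reversed. Reversing all orientations replaces $V$ by $V^*$, and \eqref{oVn} shows that the combined effect lands on $V(\nn',\cdot)$.

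The third step is to identify the colour. By \eqref{oVnn}, $V(\nn,a)^{\o}\cong V(\nn',-a+l_{\nn})$. We then track how the parameter enters the invariant: replacing $a$ by $-a+l_{\nn}$ sends $t=q^{2a}$ to $q^{2l_\nn}t^{-1}$, and the normalization of $t$ can be adjusted accordingly. The framing correction also has to be handled. The twist eigenvalue $\lambda$ is preserved by $\o$, because $\theta$ is central and $\o$ maps ribbon element to ribbon element, and the writhe is unchanged. Together these give the stated identity. Alternatively, if it is cleaner, one can compare with $V^*$ directly, using $V(\nn,a)^*\cong V(\nn',-a+l')$ together with invariance of $J$ under global orientation reversal.

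I expect the main obstacle to be the bookkeeping in the second step. This means verifying precisely that $\o$ (with the super signs for the odd generators $E_m,F_m$) intertwines the braiding $\tau\HH\Theta$ with the braiding, rather than with its inverse, and that the pivotal element $K_{2\rho}\mapsto K_{2\rho}^{-1}$ is compatible with the rotated caps and cups. The first thing I would try is to check all of this on the generators of the tangle category, namely crossings and the four cups and caps, on the level of the explicit formulas \eqref{cuv} and \eqref{jVdef}.
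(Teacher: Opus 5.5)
Your proposal is the paper's argument. You read the diagram rotated by $\pi$ about a vertical axis in the projection plane with the $\o$-twisted color, observe that this reproduces the original Reshetikhin--Turaev computation while the closure is isotopic to $L$, conclude $J^{\gg}_{L,V}=J^{\gg}_{L,\o(V)}$, and apply \eqref{oVnn}; you make explicit the Hopf-level checks ($\Delta\circ\o=(\o\ot\o)\circ\Delta^{\mathrm{op}}$, $(\o\ot\o)\Theta$ flipped, $\HH$ fixed, $K_{2\rho}\mapsto K_{2\rho}^{-1}$) that the paper leaves implicit.

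Two points to tighten. First, this rotation preserves orientations, so no dualization enters; if it did you would land on $(V^{\o})^*\cong V(\nn,a)$ and get a tautology. Your fallback via invariance under orientation reversal is circular, since the paper derives that invariance from this very argument. Second, as in the paper, $\o$ as written in \eqref{omegadef} is not an algebra map: it fixes $E_mF_m+F_mE_m$ but negates $\frac{K_m-K_m^{-1}}{q-q^{-1}}$. You need a sign such as $\o(F_m)=-E_m$, after which $(\o\ot\o)\Theta$ is exactly the super-flipped $\Theta$ --- this is the odd-sign bookkeeping you anticipated.
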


\begin{proof}
Compute $J^{\gg}_{L,V(\nn,a)}$ from a braid closure of $L$, opened at the leftmost
strand. Now rotate this open-braid by 180 degrees horizontally. The result is that overcrossings get
exchanged by undercrossings and vice-versa. Similarly, right caps become left caps
(similarly for cups). 
Thus, computing the link invariant $J^{\gg}_{L,V(\nn,a)}$ from this diagram is the same
as computing 
from the original one colored with $\o(V(\nn,a))$. But since these are link invariants, 
the result must be the same, that is $$J^{\gg}_{L,V(\nn,a)}=J^{\gg}_{L,\o(V(\nn,a))}$$
and using $\o(V(\nn,a))\cong V(\nn',-a+l)$ the result follows.
\end{proof}

Moreover, since $\o(V(\nn,a))\cong V(\nn,a)^*$ and taking duals corresponds to reversing
orientation, the above proof shows that $J^{\gg}_{L,V(\nn,a)}$ is invariant under an
overall orientation reversal.

\begin{remark}
For $\sl(2|1)$ one has $\nn'=\nn$ since simple $\sl(2)$-modules are self-dual. Thus 
$$
J^{\sl(2|1)}_{L,V(n,a)}(t,q)=J^{\sl(2|1)}_{L,V(n,a)}(q^{-n-1}t^{-1},q).
$$
Indeed, $V_n$ has basis $F_{12}^eF_2^{e'}F_1^iv_0$ for $i=0,...,n$ and $e,e'=0,1$.
The lowest weight vector is $F_{12}F_2F_1^nv_0$ since both $F_1$ and $F_2$ are zero
over this vector as it is easy to check. Clearly
$K_2(F_{12}F_2F_1^nv_0)=q^{n+1}q^a F_{12}F_2F_1^nv_0$ so this 
vector has $K_2$-degree $n+1+a$ (and $K_1$-degree $2-2(n+1)+n=-n$). Thus the dual has 
highest weight $(n,-a-n-1)$. So $\o(V(n,a))=V(n,-a-n-1)$ that is $l_n=-n-1$.
\end{remark}


\section{Proof of the genus bound}
\label{sec.proofs}

\subsection{Algebraic bound}
\label{sub.algb}

In this section we give a series of lemmas that ultimately imply the algebraic bound
of Proposition~\ref{prop.algb} highlighting the importance of the number of positive 
odd roots.
\medskip

Denote 
$$
[K,n]_{q}:=\frac{q^nK-q^{-n}K^{-1}}{q-q^{-1}} 
$$
(see~\cite[Sec.1.3]{Jantzen}).
We denote $\UQ^-[K_i, \Z]_{q_i}$ 
the $\Q(q)$-subspace spanned by vectors of the form $f[K_i,n]_{q_i}$
for some $f\in \UQ^-$ and $n\in \Z$. Since
$K_iF^J=q^{-(d(J), \a_i)}F^JK_i$, it follows that 
$$
[K_i,\Z]_{q_i}\UQ^-= \UQ^-[K_i,\Z]_{q_i} \,.
$$
The following is the analogue of \cite[Lemma 2.8]{LNV:genus-unrolled}
(but in a simpler form). 

\def\rk{\text{rank}}
\begin{lemma}
\label{lemma: commuting E_i with Fs}
 For every $i=1,\dots,r=\rk(\gg)$ and $f\in \UQ^-$ we have $$[E_i,f]\in \UQ^-[K_i,\Z]_{q_i}$$
\end{lemma}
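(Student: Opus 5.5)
We argue by induction on the length of monomials. Since $\UQ^-$ is spanned over $\Q(q)$ by monomials $F_{j_1}\cdots F_{j_k}$ in the generators, and the super-commutator $[E_i,-]$ is linear, it suffices to treat $f=F_{j_1}\cdots F_{j_k}$. We use the super-Leibniz rule
$$
[E_i,xy]=[E_i,x]\,y+(-1)^{|E_i||x|}x\,[E_i,y],
$$
valid for homogeneous $x,y$. This follows directly from the definition $[x,y]=xy-(-1)^{|x||y|}yx$.

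For the base case $k=1$ we have $[E_i,F_j]=\delta_{ij}\frac{K_i-K_i^{-1}}{q_i-q_i^{-1}}=\delta_{ij}[K_i,0]_{q_i}$, which lies in $\UQ^-[K_i,\Z]_{q_i}$ with $f=1$. The case $k=0$ is trivial, since $[E_i,1]=0$.

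For the inductive step, write $f=F_{j_1}f'$ with $f'$ a monomial of length $k-1$. Leibniz gives
$$
[E_i,f]=[E_i,F_{j_1}]\,f'\pm F_{j_1}[E_i,f'].
$$
By induction $[E_i,f']\in\UQ^-[K_i,\Z]_{q_i}$, and left multiplication by $F_{j_1}\in\UQ^-$ preserves this subspace. The first term equals $\delta_{ij_1}[K_i,0]_{q_i}f'$. By the commutation relation $[K_i,\Z]_{q_i}\UQ^-=\UQ^-[K_i,\Z]_{q_i}$ noted just before the lemma, this can be rewritten in the form $\UQ^-[K_i,\Z]_{q_i}$. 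Concretely, $K_i^{\pm1}f'=q^{\mp(d,\a_i)}f'K_i^{\pm1}$, so $[K_i,n]_{q_i}f'=f'[K_i,n-(d,\a_i)/d_i]_{q_i}$, where $-d$ is the weight of $f'$.

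The only point requiring care is that this shift is an integer. This holds because $(d,\a_i)=d_i a_{ij}$-type combinations are multiples of $d_i$: since $(\a_j,\a_i)=a_{ij}d_i$, the pairing of any element of the root lattice with $\a_i$ is $d_i$ times an integer. The sign factors coming from odd $E_m$ or odd $F$'s only contribute $\pm1$ scalars, so they do not affect membership in the $\Q(q)$-subspace. This completes the induction. We expect no real obstacle here; the integrality of the shift and the bookkeeping of super-signs are the only details to check.
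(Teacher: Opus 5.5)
Your proposal is correct and follows essentially the same route as the paper. Both arguments use super-Leibniz induction from the generator case $[E_i,F_j]=\delta_{ij}[K_i,0]_{q_i}$, and both rely on $[K_i,\Z]_{q_i}\UQ^-=\UQ^-[K_i,\Z]_{q_i}$ to move the Cartan factor to the right. Your explicit check that the shift $(d,\a_i)/d_i$ is an integer is a detail the paper leaves implicit.
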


\begin{proof}
This is clearly true if $f=F_j$ is a generator of $\UQ^-$. If the statement is true for 
$f,f'\in \UQ^-$ then 
\begin{align*}
  [E_i,ff']=[E_i,f]f'+(-1)^{|f||E_i|}f[E_i,f']\in \UQ^-[K_i,\Z]_{q_i}\cdot
  \UQ^-+\UQ^-\cdot \UQ^-[K_i,\Z]_{q_i}
\end{align*}
which is in $\UQ^-[K_i,\Z]_{q_i}$ since $[K_i,\Z]_{q_i}\UQ^-=\UQ^-[K_i,\Z]_{q_i}$.
By induction, this proves the lemma.
\end{proof}

Let $\Bnna$ be a PBW basis of $\Vnna$, i.e., a basis of the form $F^Jv_0$ for some $J:\Phi^+\to \N_{\geq 0}$, for instance, the basis of Lemma \ref{lemma: PBW on V(n,a)}.

\begin{lemma}
\label{lemma: coefficients on V(n,a)}
In the basis $\Bnna$ of $\Vnna$, root vectors act with the following coefficients:
\begin{enumerate}
    \item All negative root vectors act by matrices with 
coefficients in $\Q(q)$.
\item All even positive root vectors act by matrices with 
coefficients in $\Q(q)$.
\item All odd positive root vectors act by matrices with coefficients in
$\Q(q)q^{a}+\Q(q)q^{-a}$.
\end{enumerate}
Moreover, for $i\neq m$, $K_i$ acts with $\Q(q)$-coefficients, while $K_m$ has
coefficients in $\Q(q)q^a.$
\end{lemma}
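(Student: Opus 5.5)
We work in the basis $\Bnna=\{F^Jv_0\}$ and treat the different kinds of generators separately. The organizing observation is that the only place $a$ enters the module structure is the eigenvalue $q^a$ of $K_m$ on $v_0$. Every basis vector $F^Jv_0$ is a weight vector, with $K_i$-eigenvalue $q^{-(d(J),\a_i)}$ times the $K_i$-eigenvalue of $v_0$. For $i\neq m$ the latter is $q_i^{n_i}$ with $n_i\in\Z$, so $K_i$ acts diagonally with entries in $\Q(q)$. For $K_m$ the eigenvalue on $v_0$ is $q^a$, so the entries lie in $\Q(q)q^a$. This settles the last assertion, and since $K_i^{-1}$ is also diagonal, the same argument gives entries in $\Q(q)q^{-a}$ for $K_m^{-1}$.

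For (1), write any negative root vector as $F_\a$. Then $F_\a F^Jv_0$ is $F_\a F^J$ applied to $v_0$, with $F_\a F^J\in\UQ^-$. Re-expand this element in the PBW basis of $\UQ^-$ using the $\Q(q)$-coefficient commutation relations of Yamane and Levendorskii--Soibelman. Then use the argument of Lemma~\ref{lemma: PBW on V(n,a)}: reducing the even part $F^B$ inside $V(\nn)$ and moving even $F$'s past odd ones via Corollaries~\ref{cor.slmn} and~\ref{cor.osp} gives an expression in $\Bnna$. Every step uses only relations of $\UQ^-$ and the $U_q(\gg_0)$-module structure of $V(\nn)$, and both have $\Q(q)$ coefficients independent of $a$.

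For (2) and (3), write $E_\a$ as a polynomial in the generators $E_i$ using Lemma~\ref{lemma: odd root vectors have a unique Em}. It then suffices to understand how a single $E_i$ acts on $F^Jv_0$. By Lemma~\ref{lemma: commuting E_i with Fs},
\[
E_iF^Jv_0=(-1)^{|E_i||F^J|}F^JE_iv_0+[E_i,F^J]v_0 .
\]
For even $i$, $E_iv_0=0$ since $v_0$ is a highest weight vector of $V(\nn)$; for $i=m$, $E_mv_0=0$ by construction. The bracket lies in $\UQ^-[K_i,\Z]_{q_i}$, so it acts on $v_0$ through $\UQ^-$ (which has $\Q(q)$-coefficients by (1)) times scalars $[K_i,n]_{q_i}v_0$. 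For $i\neq m$ these scalars are in $\Q(q)$. For $i=m$ they equal $\frac{q^nq^a-q^{-n}q^{-a}}{q-q^{-1}}\in\Q(q)q^a+\Q(q)q^{-a}$.

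Hence each even $E_i$ acts with $\Q(q)$ entries, and $E_m$ acts with entries in $\Q(q)q^a+\Q(q)q^{-a}$. Now apply part (1) of Lemma~\ref{lemma: odd root vectors have a unique Em}: an even root vector is a $\Q(q)$-combination of words in even $E_i$'s, which gives (2). By part (2) of that lemma, each word in an odd root vector contains exactly one $E_m$. A product of matrices with $\Q(q)$ entries and exactly one matrix with entries in $\Q(q)q^{a}+\Q(q)q^{-a}$ again has entries in $\Q(q)q^a+\Q(q)q^{-a}$, since $\Q(q)q^a+\Q(q)q^{-a}$ is a $\Q(q)$-module. This gives (3).

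The main obstacle is the careful justification of (1). One must check that re-expressing $F_\a F^J$ in the chosen basis $\Bnna$ never introduces $a$-dependence, in particular that reducing even factors inside $V(\nn)$ and reordering even past odd $F$'s uses only $\Q(q)$-relations. I would handle this by the induction on $|A|$ already used in Lemma~\ref{lemma: PBW on V(n,a)}, noting that every relation invoked there is a relation in $\UQ^-$ or in the $a$-independent module $V(\nn)$.
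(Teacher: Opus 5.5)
Your proof is correct and follows essentially the same route as the paper. You use the weight eigenvalues for the $K_i$, the $a$-independence of the $\UQ^-$-action for (1), and Lemma~\ref{lemma: commuting E_i with Fs} with $E_iv_0=0$ to reduce $E_i(fv_0)$ to $\Q(q)$-combinations of $f_jv_0$ times $[K_i,n]_{q_i}v_0$, then Lemma~\ref{lemma: odd root vectors have a unique Em} to pass from simple to arbitrary positive root vectors; your justification of (1) is just a more detailed version of the paper's one-line appeal to the PBW theorem on $\UQ^-$.
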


\begin{proof}
The statement for the $K_i$'s is obvious from the definition of $\Vnna$. Statement
$(1)$ is also clear by the PBW theorem on $\UQ^-$. We now prove $(2)$ and $(3).$
Consider a simple positive root 
vector $E_i$ and let $fv_0$ be a vector in $\Bnna$ with $f\in \UQ^-$. 
By Lemma \ref{lemma: commuting E_i with Fs} we can 
write 
$$
[E_i,f]=\sum_jf_j[K_i, m_j]_{q_i}
$$ for some $f_j\in \UQ^-, m_j\in \Z$. Since $E_iv_0=0$ we get 
$$
E_i(fv_0)=\sum_j f_j\frac{q_i^{m_j}K_i-q_i^{-m_j}K_i^{-1}}{q_i-q_i^{-1}}v_0.
$$
If $E_i$ is even, then $K_iv_0=q^{n_i}v_0$ so that
$$
E_i(fv_0)=\sum_j [m_j+n_i]_{q_i}(f_jv_0)
$$
and if $E_i=E_m$ is odd (so $d_m=1$ and $q_m=q$), then
$$
E_m(fv_0)=\sum_j[m_j+a]_{q}(f_jv_0).
$$

Each $f_jv_0$ can be expanded in the basis $\Bnna$ with $\Q(q)$-coefficients, which
implies that 
$E_i(fv_0)$ has $\Q(q)$-coefficients for even $E_i$ and coefficients in
$\Q(q)q^a+\Q(q)q^{-a}$ for 
$E_m$. This proves the lemma for simple positive roots. Now, if $\a\in \Deev$, 
Lemma \ref{lemma: odd root vectors have a unique Em} says that $E_{\a}$ can be
expanded as 
a sum of products of positive even simple root vectors, hence $E_{\a}$ is represented as 
a sum of products of matrices with $\Q(q)$-coefficients, hence 
so does $E_{\a}$. If $\a\in\Deodd$, Lemma \ref{lemma: odd root vectors have a unique Em}
says that $E_{\a}$ is a sum of products of simple roots where each product has a
unique odd simple root. The lemma follows again from the statement on simple
positive roots.
\end{proof}

Now let 
\be
\label{Xdef}
X=\Vnna\ot \Vnna^*
\ee
equipped with the basis $\BB$ which is the basis $\Bnna$ tensored with its dual basis.

\begin{lemma}
\label{lemma: coefficients on X}
In the basis $\BB$ of $X$, root vectors act with the following coefficients:
\begin{enumerate}
\item All negative root vectors act with coefficients in $\Q(q)$.
\item All even positive root vectors act with coefficients in $\Q(q)$.
\item All odd positive root vectors act with coefficients in $\Q(q)+\Q(q)q^{-2a}$.
\end{enumerate}
Moreover, all Cartan vectors act with $\Q(q)$-coefficients.
\end{lemma}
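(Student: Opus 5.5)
The plan is to compute the action of the Chevalley generators on $X=\Vnna\ot\Vnna^*$ directly from the coproduct and antipode, keeping track of the powers of $q^{a}$. After that, the statement for all root vectors follows from Lemma~\ref{lemma: odd root vectors have a unique Em}, exactly as in the proof of Lemma~\ref{lemma: coefficients on V(n,a)}.

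\textbf{Step 1: the dual module.} For $x\in\Uqg$, the action on $\Vnna^*$ in the dual basis is given by the (super)transpose of the matrix of $S(x)$ on $\Vnna$, up to signs in $\{\pm1\}$ that are irrelevant for coefficient fields. Every basis vector $F^Jv_0$ is a $K_m$-eigenvector with eigenvalue in $q^{a}q^{\Z}$. Hence $K_m^{\pm1}$ acts on $\Vnna$ by $q^{\pm a}$ times a diagonal $\Q(q)$-matrix, and on $\Vnna^*$ by $q^{\mp a}$ times such a matrix. Combined with Lemma~\ref{lemma: coefficients on V(n,a)}, this gives the following, where as usual $m$ is the odd index:
\begin{itemize}
\item $S(F_i)=-K_iF_i$ acts on $\Vnna^*$ with $\Q(q)$-coefficients for $i\ne m$, and with $\Q(q)q^{a}$-coefficients for $i=m$.
\item $S(E_i)=-E_iK_i^{-1}$ acts with $\Q(q)$-coefficients for $i\neq m$.
\item For $i=m$, $S(E_m)$ acts with coefficients in $(\Q(q)q^a+\Q(q)q^{-a})q^{-a}=\Q(q)+\Q(q)q^{-2a}$.
\end{itemize}

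\textbf{Step 2: the tensor product.} On a basis vector $v\ot\varphi$ of weight $\lambda-\mu$, each $K_i$ acts by $q^{(\cdot)}$ with the $a$-contributions cancelling. So all Cartan elements act with $\Q(q)$-coefficients.

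For $F_m$, use $\De(F_m)=K_m^{-1}\ot F_m+F_m\ot1$. The term $F_m\ot 1$ has $\Q(q)$-coefficients by Lemma~\ref{lemma: coefficients on V(n,a)}(1). In the term $K_m^{-1}\ot F_m$, the factor $q^{-a}$ coming from $K_m^{-1}$ on $\Vnna$ cancels the factor $q^{a}$ of $F_m$ on $\Vnna^*$ from Step 1.

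For $E_m$, use $\De(E_m)=E_m\ot K_m+1\ot E_m$. The term $E_m\ot K_m$ has coefficients in $(\Q(q)q^a+\Q(q)q^{-a})\cdot q^{-a}=\Q(q)+\Q(q)q^{-2a}$, since $K_m$ acts on the dual by $q^{-a}$ times a $\Q(q)$-matrix. The term $1\ot E_m$ lies in the same space by Step 1.

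For the even simple generators $E_i,F_i$ with $i\ne m$, only $K_i^{\pm1}$ with $i\neq m$ appear, so the coefficients are in $\Q(q)$.

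\textbf{Step 3: root vectors.} Negative root vectors are polynomials in the $F_i$ with $\Q(q)$-coefficients, so they act with $\Q(q)$-coefficients by Step 2. By Lemma~\ref{lemma: odd root vectors have a unique Em}(1), an even $E_\a$ is a $\Q(q)$-combination of products of even $E_i$, hence acts with $\Q(q)$-coefficients. By part (2) of the same lemma, an odd $E_\a$ is a combination of products containing exactly one factor $E_m$ and otherwise even $E_i$. Such a product is a product of $\Q(q)$-matrices and a single matrix with entries in $\Q(q)+\Q(q)q^{-2a}$. Since $\Q(q)+\Q(q)q^{-2a}$ is a $\Q(q)$-module, the product also has entries there.

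\textbf{Main obstacle.} The delicate point is Step 1 and the bookkeeping for $K_m$: one must check that the eigenvalue of $K_m$ on each basis vector of $\Vnna^*$ really is $q^{-a}$ times an integer power of $q$, with the pivotal and dual conventions of Section~\ref{sub.piv}. This is what makes the $q^{\pm a}$ factors cancel for $F_m$ and for the Cartan part, and shift $q^{a}$ to $1$ for $E_m$ rather than leaving a $q^{2a}$ term. I would verify it first on $K_m$ alone, using $K_mF^J v_0=q^{a-(d(J),\a_m)}F^Jv_0$.
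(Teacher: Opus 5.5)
Your proposal is correct and follows essentially the same route as the paper. You compute the Chevalley generators on $X$ via $\Delta$ and $S$, use the cancellation of $q^{\pm a}$ for the Cartan part and for $K_m^{-1}\ot F_m$, and use the shift $(\Q(q)q^a+\Q(q)q^{-a})q^{-a}=\Q(q)+\Q(q)q^{-2a}$ for both terms of $\Delta(E_m)$. You then pass to all root vectors via Lemma~\ref{lemma: odd root vectors have a unique Em}, exactly as the paper does. The only minor point is that the check you flag as the main obstacle needs just $S(K_m)=K_m^{-1}$ (the dual action), not the pivotal structure.
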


\begin{proof}
That the Cartan vectors act with $\Q(q)$-coefficients is easy to see since 
$$
K_m(b_i\ot b_j)=q^{l_i+a-l_j-a}(b_i\ot b_j)=q^{l_i-l_j}b_i\ot b_j
$$
if $K_mb_i=q^{l_i+a}$ on a basis vector $b_i$ of $\Bnna$. All other $K_i$'s act with 
$\Q(q)$-coefficients on $\Vnna$, hence on $X$. Thus, the generators of $\UQ^0$ act with 
$\Q(q)$-coefficients, hence all vectors in $\UQ^0$ do.
\medskip

We now prove $(1)$. The simple negative root vectors act on $X$ by 
$\De(F_i)=K_i^{-1}\ot F_i+F_i\ot 1$. Clearly, $F_i\ot 1$ acts on $X$ with
$\Q(q)$-coefficients by Lemma \ref{lemma: coefficients on V(n,a)} so let's see the
coefficients of $K_i^{-1}\ot F_i$ on $\BB$. By definition of the dual action, $F_i$
acts on the dual basis of $\Vnna^*$ as the transpose of the matrix representing
$S(F_i)=-K_iF_i$ over $\Bnna$. When $F_i$ is even, this matrix has $\Q(q)$-coefficients
(since $K_i$ and $F_i$ do), hence so does $K_i^{-1}\ot F_i$. When $F_i=F_m$ is odd,
the same matrix has coefficients in $\Q(q)q^a$, 
but this extra $q^a$ cancels with the $q^{-a}$ from the action of $K_m^{-1}$ on the first 
tensor factor. In all cases, $F_i$ acts with $\Q(q)$-coefficients in the basis $\BB$
of $X$. Having proved it for the generators of  $\UQ^-$, the statement follows over
all $\UQ^-$.
\medskip

We now prove $(2)$ and $(3)$. Here we use that $\De(E_i)=E_i\ot K_i+1\ot E_i$. When
$E_i$ is even, it is clear this has $\Q(q)$-coefficients on $\BB$ since $E_i,K_i$
have $\Q(q)$-coefficients on $\Bnna$. When $E_i=E_m$ is odd, the coefficients of
$E_m\ot K_m$ are in 
$$
(\Q(q)q^a+\Q(q)q^{-a})\Q(q)q^{-a}=\Q(q)+\Q(q)q^{-2a}
$$
by $(3)$ of Lemma \ref{lemma: coefficients on V(n,a)}. The coefficients of $1\ot E_m$
have the same form since $E_m$ acts on $\Vnna^*$ as the transpose of $-E_mK_m^{-1}$.
This proves the statement for the positive simple roots. The statement for all
positive roots follows from Lemma \ref{lemma: odd root vectors have a unique Em} as
in the proof of the previous lemma.
\end{proof}

The following is the analogue of \cite[Prop. 2.11]{LNV:genus-unrolled}. Denote by 
$U_{\Q}$  the $\Q(q)$-subalgebra of $U_q(\gg)$ generated by the $K_i^{\pm 1},
E_i,F_i$ for $i=1,\dots,r$. 

\begin{proposition}(Algebraic bound)
\label{prop.algb}
Let $Z\in U_{\Q}$. Then, in the basis $\BB$ of $X$, the action of $Z$ on $X$ 
is represented by a matrix whose coefficients are polynomials in $q^{-2a}$ 
(with $\Q(q)$-coefficients) of degree $\leq \varphi^+=|\Deodd|$ in $q^{-2a}$.
\end{proposition}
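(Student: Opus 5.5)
The plan is to use the triangular (PBW) decomposition of $U_{\Q}$ and then count how many odd positive root vectors can occur in a single monomial. By the PBW theorem for $U_q(\gg)$ (Yamane), $U_{\Q}$ is spanned over $\Q(q)$ by elements of the form $F^{J}\,k\,E^{I}$. Here $J,I\in F_{\Phi^+}$ and $k$ is a monomial in the $K_i^{\pm1}$. The key point is that in any nonzero PBW monomial $E^I=\prod_{\a\in\Phi^+}E_\a^{I(\a)}$ we have $I(\a)\in\{0,1\}$ for every odd root $\a$, because $E_{\text{odd}}^2=0$. So by linearity it suffices to prove the claim for a single element $Z=F^J k E^I$.

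For such a $Z$, the action on $X$ in the basis $\BB$ is the product of the matrices of the individual factors. By Lemma~\ref{lemma: coefficients on X}:
\begin{itemize}
\item the factors $F_\a$ contribute matrices with $\Q(q)$-coefficients;
\item the Cartan part $k$ contributes a matrix with $\Q(q)$-coefficients;
\item each even $E_\a$ contributes a matrix with $\Q(q)$-coefficients;
\item each odd $E_\a$ contributes a matrix with coefficients in $\Q(q)+\Q(q)q^{-2a}$, that is, polynomials of degree $\le1$ in $q^{-2a}$.
\end{itemize}
Matrix multiplication adds degrees in $q^{-2a}$. The number of odd factors in $E^I$ is $\sum_{\a\in\Deodd}I(\a)\le|\Deodd|=\varphi^+$. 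Hence the product matrix has entries that are polynomials in $q^{-2a}$ over $\Q(q)$ of degree at most $\varphi^+$. Summing over finitely many PBW terms preserves this bound.

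The one step that needs care is justifying that every $Z\in U_{\Q}$ really is a $\Q(q)$-linear combination of ordered monomials $F^J k E^I$ with odd exponents in $\{0,1\}$. Two facts give this:
\begin{itemize}
\item the triangular decomposition $U_{\Q}\cong \UQ^-\otimes\UQ^0\otimes\UQ^+$ for quantum supergroups of type I;
\item the PBW basis of $\UQ^+$ built from a convex order, which is available from \cite{Yamane}, with the Levendorskii--Soibelman relation \eqref{LS} supplying the straightening.
\end{itemize}
If one prefers to avoid the full PBW theorem for $\UQ^+$, an alternative is to show directly that reordering a word in the $E_i$ never increases the number of odd factors. This holds because every $E_i$ except $E_m$ is even, and $E_m$ enters each odd root vector exactly once (Lemma~\ref{lemma: odd root vectors have a unique Em}). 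Consequently, a word in the $E_i$ that contains $E_m$ exactly $p$ times lies in the span of PBW monomials with exactly $p$ odd factors, by degree considerations in the $\a_m$-coordinate of the weight. Together with $E_{\text{odd}}^2=0$ this forces $p\le\varphi^+$ for any nonzero term, and the bound follows as above.
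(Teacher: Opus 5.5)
Your proposal is correct and is essentially the paper's proof: the paper also expands $Z$ in a PBW basis, uses Lemma~\ref{lemma: coefficients on X} for the individual factors, and bounds the $q^{-2a}$-degree by the at most $\varphi^+$ odd root vectors that can occur in a single monomial $E^J$; its ordering $E^JK^AF^I$ differs from your $F^JkE^I$, which makes no difference. Your optional ``alternative'' at the end still relies on PBW monomials spanning $\UQ^+$, so it does not actually avoid the PBW theorem, but the main argument does not need it.
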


\begin{proof}
We first prove it for a vector of the form $Z=E^J$ for some $J:\Phi^+\to \N_{\geq 0}$,
that is $Z=\prod_{\a\in\Phi^+}E_{\a}^{J(\a)}$ where $J(\a)\in \{0,1\}$ for every odd
root $\a$. By the previous lemma, for each even $\a$, $E_{\a}^{J(\a)}$ is represented
by a matrix with $\Q(q)$-coefficients. For each odd $\a$, $E_{\a}^{J(\a)}$ is
represented by a matrix with coefficients of degree one in $q^{-2a}$ if $J(\a)=1$
(or the identity matrix if $J(\a)=0$). Multiplying all these matrices, we get 
that $Z=E^J$ is represented by a matrix with coefficients of the desired form.
\medskip

Now write an arbitrary $Z$ in the PBW basis as $Z=\sum c_{JAI}(q) E^JK^AF^I$ where 
$c_{JAI}(q)\in \Q(q)$. Since each $K^AF^I$ acts on $\BB$ with $\Q(q)$-coefficients,
each $E^J$ satisfies the property of the lemma, and the property is preserved after
taking sums\footnote{Here 
it is important that the coefficients of $E^J$'s are polynomials in $q^{-2a}$ of degree 
$\leq \phip$ instead of merely Laurent polynomials of span $\leq \phip$.}, 
the statement follows for $Z$.
\end{proof}

Recall that $X=V\ot V^*$ where $V=V(\nn,a)$. Then $X^*\cong V^{**}\ot V^*\cong X$ via 
the pivotal structure of $\CC$, and recall from Section~\ref{sub.piv} the corresponding
isomorphism 
\be
\label{jX}
\iota_X:X^*\to X, \qquad \iota_X=(j_V\ot\id_{V^*})\circ \psi
\ee
where $\psi$ is the canonical isomorphism $X^*\cong V^{**}\ot V^*$. 

\begin{lemma}
\label{lem.jX}
$\iota_X$ is represented by a diagonal matrix (in the standard basis) with
coefficients in $q^{a\varphi^+}\Q(q)$.
\end{lemma}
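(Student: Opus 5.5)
The plan is to reduce the statement to the single factor $j_V$ and compute it explicitly. The map $\psi$ is the canonical isomorphism $(V\ot V^*)^*\cong V^{**}\ot V^*$. In the dual of the standard basis $\BB$ and the corresponding basis of $V^{**}\ot V^*$, it is a signed permutation: pairing conventions may introduce a super sign $(-1)^{|b_i||b_j|}$ and reorder tensor factors. All its entries are therefore $\pm1$, and after the appropriate identification of bases it is diagonal with $\pm 1$ entries. So everything reduces to showing that $j_V$, in the basis $\Bnna$ (double dual basis identified with $\Bnna$), is diagonal with entries in $q^{a\phip}\Q(q)$ up to an overall factor.

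By \eqref{jVdef}, $j_V^{-1}(v)(v')=v'(K_{2\rho}v)$, so $j_V^{-1}$ is the action of $K_{2\rho}$ composed with the canonical identification $V\to V^{**}$. Since each basis vector $F^AF^Bv_0$ of Lemma~\ref{lemma: PBW on V(n,a)} is a weight vector, $K_{2\rho}$ acts diagonally. Writing $2\rho=\sum_i k_i\a_i$, we have $K_{2\rho}=\prod_i K_i^{k_i}$. By Lemma~\ref{lemma: coefficients on V(n,a)}, $K_i$ for $i\neq m$ acts by elements of $\Q(q)$ (integer powers of $q$), and $K_m$ acts by $q^{a}$ times a power of $q$. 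Hence $K_{2\rho}$ acts diagonally with entries in $q^{k_m a}\Q(q)$, and $j_V$ has entries in $q^{-k_m a}\Q(q)$.

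The key step is to compute the coefficient $k_m$ of the odd simple root in $2\rho=2\rhoev-2\rhoodd$. By the structure of type I roots in Section~\ref{sub.roots}, every even root is a combination of even simple roots only, so it contributes $0$ to $k_m$. Every odd positive root has the form $\b+\a_m$ with $\a_m$ appearing exactly once, so $2\rhoodd$ has $\a_m$-coefficient $\phip$. Thus $k_m=-\phip$, and $j_V$ acts by entries in $q^{a\phip}\Q(q)$. I would double-check this against the explicit roots: in $\sl(m|n)$ the odd roots $\a_{ij}$ with $i\le m<j$ each contain $\a_m$ once, and in $\osp(2|2n)$ the odd roots $\ve_1\pm\d_k$ each contain $\a_1$ once. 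Tensoring with $\id_{V^*}$ and composing with $\psi$ preserves diagonality and the coefficient ring.

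The only potential obstacle is bookkeeping: the super-sign conventions in $\psi$, and making sure that $\psi$ really is diagonal (rather than an off-diagonal permutation) in the chosen bases of $X^*$ and $X$. I would handle this by fixing the basis of $X^*$ to be the dual basis $(b_i\ot b_j^*)^*$ and matching it to $b_i^{**}\ot b_j^*$, so that $\psi$ sends basis vectors to $\pm$ basis vectors with the same labels. The exponent computation itself is routine given the root description.
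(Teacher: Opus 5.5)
Your proposal is correct and is essentially the paper's argument. In both, $j_V^{-1}$ is the action of $K_{2\rho}$ followed by the canonical identification $V\cong V^{**}$; since $2\rho=2\rhoev-2\rhoodd$ has $\a_m$-coefficient $-\phip$ (each odd positive root contains $\a_m$ exactly once, even roots not at all), $j_V$ is diagonal with entries in $q^{a\phip}\Q(q)$, while $\psi$ and $\id_{V^*}$ contribute only signs. You spell out the bookkeeping the paper leaves implicit, namely the integer $k_m$ and the sign/permutation in $\psi$.
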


\begin{proof}
Since every odd positive root can be written as $\a=\b+\a_m$ with $\b\in \Deev$ it
follows that $K_{2\rhoodd}:\Vnna\to \Vnna$ has coefficients in $q^{a\varphi^+}\Q(q)$.
Thus, since $j_V$ is given by~\eqref{jVdef} and because of~\eqref{rho} and Lemma 
\ref{lemma: coefficients on V(n,a)}, it is represented by a diagonal matrix 
(in the usual basis) with coefficients in $q^{a\varphi^+}\Q(q)$. This and~\eqref{jX}
imply that the same holds for $\iota_X$.
\end{proof}

\subsection{Seifert surface argument}
\label{subs: Seifert formula}

Let $L$ be an unframed, oriented link and let $\S$ be a connected Seifert surface for 
$L$. Let $N=\text{rank}_{\Z} H_1(\S;\Z)$ and $g=g(\S)$, so $N=2g+s-1$ if $L$ has $s$
components. After an isotopy, we can suppose that $\S$ is obtained by thickening a
framed tangle $\TS\sb \R^2\t[0,1]$ with $N$ components, where each component is a
band with both endpoints on a 
fixed line in $\R^2\t\{1\}$ and then attaching a disk on top. For simplicity of
notation, we will 
suppose the first $2g$ components of $\TS$ alternate their endpoints, and the last
$s-1$ components 
have their endpoints next to each other (this can always be assumed after an isotopy
of $\S$). 
Let $L_o$ be the framed $(1,1)$-tangle obtained by opening 
$L=\partial\S$ at a point in the top rightmost corner of the disk attached, see Figure 
\ref{figure:Seifert}. We will consider $L_o$ as a framed tangle with the 
framing coming from $\S$, thus $w(L_o)=0$ and the quantum invariants need no framing
normalization. 
In what follows, by standard basis of $V=\Vnna, V^*, X$ or $X^*$ we mean a PBW basis of
$V$ (say, that of Lemma \ref{lemma: PBW on V(n,a)}), its dual 
basis, the tensor product of these two basis, or the dual of the latter, respectively.

\begin{figure}[H]
\centering
\includegraphics[width=14cm]{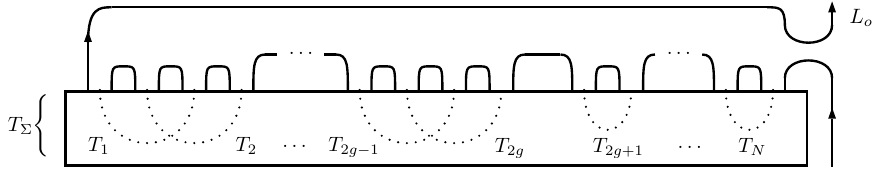}
\label{figure:Seifert}
\caption{The tangle $L_o$ obtained by cutting $L=\partial\S$ at a point. The
  tangle $\TS$ lies inside the box, in this box, all components $T_1,\dots,T_N$
  might be linked together. }
\end{figure}

Recall that $X=V\ot V^*$ where $V=V(\nn,a)$.
The Reshetikhin--Turaev invariant $F(L_o,V): V \to V$ is computed in three steps.

First, suppose each component of $\TS$ is oriented to the right, that is, for any
component $T_i$ of $\TS$ the orientation of $T_i$ is downwards at the leftmost endpoint
of $T_i$ on $\R^2\t\{1\}$ and upwards at the rightmost endpoint. Then the
Reshetikhin--Turaev invariant of the tangle $\TS$ colored with $X$ is a morphism 
$$
F(\TS,X):\C\to  (X^*\ot X^*\ot X\ot X)^{\ot g}\ot (X^*\ot X)^{\ot s-1}.
$$
Second, 
let $\FF(T,X)$ be the result of applying $\iota_X:X^*\to X$ to each $X^*$ tensor factor:
\begin{align}
\label{eq: Fbar from F and j's}
\FF(\TS,X):=[(\iota_X\ot \iota_X\ot \id_X\ot \id_X)^{\ot g}\ot 
(\iota_X\ot\id_X)^{\ot s-1}]\circ F(\TS,X):\C\to X^{\ot 2N}.
\end{align}
Finally $F(L_o,V):V\to V$ is given by
\begin{align}
\label{eq: LG from FbarTX}
F(L_o,V)=(\id_{V}\ot \lev_{V}^{\ot 2N})\circ(\FF(\TS,X)\ot\id_{V}).
\end{align}

\begin{lemma}
\label{lemma: braiding universal inv form on XotX}
The braiding on $X\ot X$ can be written as 
$$
c_{X,X}(v\ot w)=\tau_{X,X}\left(R_X(v\ot w)\right)
$$
for some $R_X\in U_{\Q}^{\ot 2}$. Similarly, the inverse can be written as 
$c_{X,X}^{-1}(v\ot w)=\tau(\overline{R}_X(v\ot w))$ for some 
$\overline{R}_X\in U_{\Q}^{\ot 2}$. Here $\tau_{X,X}$ 
denotes the usual transposition of vector spaces: $\tau_{X,X}(u\ot v)=v\ot u$.
\end{lemma}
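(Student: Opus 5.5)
The plan is to start from the braiding formula \eqref{cuv} for $U=V=X$. This gives $c_{X,X}=\tau_{X,X}\circ\HH\circ\Theta$, where $\Theta=\sum_{I\in F_{\Phi^+}}c_I E^I\ot F^I$. Since $X$ is finite dimensional, only finitely many $I$ act nonzero on $X\ot X$. Truncating $\Theta$ to those $I$ therefore gives an honest element $\Theta_X=\sum_{I\ \text{finite}} c_I E^I\ot F^I\in U_{\Q}^{\ot 2}$, because $c_I\in\Q(q)$ and $E^I,F^I$ are products of root vectors lying in $\UQ^\pm\subset U_{\Q}$. It then remains to realize $\HH$ on $X\ot X$ by an element of $U_{\Q}^{\ot 2}$ (or of $\UQ^0\ot\UQ^0$) and to set $R_X=\HH_X\Theta_X$.

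The key observation for $\HH$ is that all weights of $X=V\ot V^*$ lie in the root lattice $L_R$. Indeed, by Lemma \ref{lemma: coefficients on X} (Cartan part), the weight of $b_i\ot b_j^*$ is the difference of the weights of $b_i$ and $b_j$ in $V(\nn,a)$. That difference is $d(J_j)-d(J_i)$ for the PBW monomials $F^{J}v_0$, so it lies in $L_R$ and the $a$-dependence cancels. Hence the set $W\subset L_R$ of weights of $X$ is finite. On $X\ot X$, $\HH$ acts on $X_\b\ot X_\c$ by the scalar $q^{(\b,\c)}$, which is an integer power of $q$ (or a power of $q^{1/d}$ for a fixed $d$ coming from the inverse Cartan matrix; since the weights are in $L_R$, $(\b,\c)$ is an integer because the form is integral on $L_R$). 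Let $P_\b\in\UQ^0$ be the Lagrange-interpolation projector onto the weight space $X_\b$. This is a polynomial in the $K_i^{\pm1}$ with $\Q(q)$ coefficients. It exists because distinct weights in $W\subset L_R$ are separated by the characters $K_i\mapsto q^{(\b,\a_i)}$ (the Cartan matrix is invertible), and the finitely many eigenvalues involved are distinct elements of $q^{\Z}\subset\Q(q)$. Then
$$\HH_X:=\sum_{\b,\c\in W} q^{(\b,\c)}P_\b\ot P_\c\in U_{\Q}^{\ot 2}$$
acts on $X\ot X$ exactly as $\HH$. Combining the two pieces, $c_{X,X}(v\ot w)=\tau_{X,X}(\HH_X\Theta_X(v\ot w))$, so $R_X=\HH_X\Theta_X$.

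For the inverse, note that $R_X$ acts invertibly on $X\ot X$, so one can define $\overline R_X$ from the action of $R_X^{-1}$ conjugated by the flip. Concretely, $c^{-1}_{X,X}=\Theta^{-1}\HH^{-1}\tau$ on $X\ot X$, and the inverse of the quasi-R-matrix is again a sum $\sum \bar c_I E^I\ot F^I$ (up to the antipode-type formula $\Theta^{-1}=\bar\Theta$) with $\Q(q)$ coefficients. Moving $\tau$ past it swaps tensor factors: $\tau^{-1}$ composed with something in $U_{\Q}^{\ot2}$ equals $\tau$ composed with its flipped version, and the flip of an element of $U_{\Q}^{\ot2}$ is still in $U_{\Q}^{\ot2}$. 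The $\HH^{-1}$ part is handled by the same projector sum with $q^{-(\b,\c)}$.

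The main obstacle I expect is the care needed with $\HH$: in general it is not in $U_{\Q}^{\ot2}$ because of the $q^a$-weights. The whole point is that on $X$ the weights are in $L_R$, so interpolation with $\Q(q)$ coefficients works. A possible fractional exponent from $(a'_{ij})$ would force a harmless extension of scalars to $\Q(q^{1/d})$, and I would check that the integrality of $(\cdot,\cdot)$ on $L_R$ avoids this.
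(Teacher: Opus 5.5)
Your proposal is correct and follows essentially the same route as the paper: you truncate $\Theta$ on $X\ot X$, and you use that all weights of $X=V\ot V^*$ lie in $L_R$ (so $(\beta,\gamma)\in\Z$ and the $a$-dependence cancels) to realize $\HH$ on $X\ot X$ by an element of $\UQ^0\ot\UQ^0$ with $\Q(q)$-coefficients; you then obtain $\overline{R}_X$ from the invertibility of $\Theta$. Your Lagrange-interpolation weight projectors $P_\beta$ are simply an explicit solution of the Vandermonde-type linear system for the coefficients $h_{\gamma\delta}$ that the paper invokes.
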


\begin{proof}
From the formula ~\eqref{cuv} of the braiding and the definition of $\HH$
from~\eqref{Hpart} we just need to prove that $\HH:X\ot X\to X\ot X$ can be 
written as the action of some element in $U_{\Q}^{\ot 2}$. In other words, we need to
show there exist coefficients $h_{\c\d}\in \Q(q)$ with $\c,\d$ in the root lattice
$L_R$ such that 
$$
q^{(\a,\b)}x\ot y=\HH(x\ot y)=\sum_{\c,\d\in L_R}h_{\c\d}K_{\c}x\ot K_{\d}y
$$
for all $x,y\in X$ of weights $\a,\b\in L_R$ respectively, where
$K_{\c}=\prod_{i=1}^rK_i^{c_i}$ 
if $\c=\sum c_i\a_i$ ($c_i\in \Z$). This is equivalent to finding $h_{\c\d}$ such that
$$
q^{(\a,\b)}=\sum_{\c,\d\in L_R}h_{\c\d}q^{(\a,\c)+(\b,\d)} \,.
$$
for all weights $\a,\b\in L_R$ of $X$ (which is a finite set). Expressing
$\a,\b,\c,\d$ in the basis of simple roots, we are led to proving that one can write 
$$
q^{a a_{ij} b}=\sum_{c,d\in \Z}h_{cd}q^{ac+bd}.
$$
for all $a,b$ in a finite set of integers. This is a system of linear equations in the 
variables $h_{cd}$ of Vandermonde type, which can be shown to always have a solution when 
$q$ is generic. Since the quasi-R-matix $\Theta$ is invertible in $U_{\Q}^{\ot 2}$ the 
statement for the inverse follows from what we just proved.
\end{proof}

Thanks to the previous lemma, despite not having a universal invariant in 
$U_{\Q}^{\ot N}$ for 
$\TS$ that works for all representations (because the operator $q^{H\ot H/2}$ cannot
be written as 
$\sum a\ot b$ with $a,b\in U_{\Q}$ acting on all representations), there 
is such an element that works for $X$-colored tangles:

\begin{lemma}
There exists an element $Z_{\TS}\in U_{\Q}^{\ot N}$ (that depends on $X$) such that 
$\FF(\TS,X)$ is left multiplication by $Z_{\TS}$. More precisely, 
\begin{align}
\label{eq: FTX from ZT}
F(\TS,X)=P\circ L_{Z^{ev}_{\TS}}\circ  \coev_X^{\ot N}
\end{align}
   
\noindent 
where $P$ is a permutation isomorphism of super-vector spaces and $L_{Z^{ev}_{\TS}}$ means 
left multiplication by $Z_{\TS}$ only on the even tensor factors of $X^{\ot 2N}$.
\end{lemma}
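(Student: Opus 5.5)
We follow Habiro's construction of the universal invariant of bottom tangles, adapted to the $X$-colored setting by means of Lemma~\ref{lemma: braiding universal inv form on XotX}. Since $\FF(\TS,X)$ is obtained from $F(\TS,X)$ by the fixed isomorphisms $\iota_X$, it suffices to establish~\eqref{eq: FTX from ZT}. First we isotope $\TS$ into a diagram made of elementary pieces: $N$ caps at the bottom, namely $\coev_X$ (or left coevaluations), then crossings between strands, then the endpoints at the top line. Each strand of a component $T_i$ runs from the left endpoint down to the bottom cap and back up to the right endpoint. We then read $F(\TS,X)$ as $\coev_X^{\ot N}$ followed by a composition of crossings, together with possible local maxima and minima (kinks) that have to be straightened.

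Next we replace every crossing by its algebraic form. By Lemma~\ref{lemma: braiding universal inv form on XotX}, a positive crossing acts as $\tau\circ R_X$ and a negative one as $\tau\circ\overline{R}_X$, with $R_X,\overline{R}_X\in U_{\Q}^{\ot 2}$. Crossings may involve strands with either orientation, so the corresponding factors act on $X$ or on $X^*$. We rewrite the action on $X^*$ in terms of the action on $X$ via the antipode, since $u$ acts on $X^*$ as the transpose of $S(u)$. On the coevaluation $\coev_X(1)=\sum b_i\ot b_i^*$, an element acting on the $X^*$ leg can be transferred to the $X$ leg, by the standard identity $(1\ot u)\coev=(S(u)\ot 1)\coev$ up to super signs. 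The antipode preserves $U_{\Q}$, since $S(E_i)=-E_iK_i^{-1}$ and so on. Local extrema and kinks contribute pivotal factors $K_{2\rho}^{\pm1}$, which lie in $U_{\Q}$ because $2\rho\in L_R$. Transposing all the beads along each strand to the bottom cap, and following Habiro's ordering convention for the beads on a strand, we collect everything into an element $Z_{\TS}=\sum z_1\ot\cdots\ot z_N\in U_{\Q}^{\ot N}$. Here $z_i$ is the ordered product of the (possibly antipoded) beads on component $T_i$, and it acts on the $X$-factor of the $i$-th coevaluation. The permutation $P$ records the reordering of the tensor factors produced by the transpositions $\tau$ and by the ordering of the endpoints on the top line, including the Koszul signs.

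The step we expect to be the main obstacle is the well-definedness of this bead-sliding in the super and $X$-dependent setting. The element $R_X$ is not a universal $R$-matrix: it only agrees with the braiding on $X\ot X$. So when beads slide through other crossings, or through caps, we must check that we only ever evaluate $R_X$ on tensor factors that are genuinely copies of $X$ or $X^*$. For $X^*\ot X$ and $X^*\ot X^*$ we would apply Lemma~\ref{lemma: braiding universal inv form on XotX} again, using $X^*\cong X$ through $\iota_X$ and noting that $\iota_X$ commutes with the $U_q(\gg)$-action. We must also track the super signs when moving odd elements past odd tensor factors. We would handle this by never commuting beads past other strands: each bead is pushed only along its own strand down to its own cap, which requires only the antipode identities at the extrema, and the resulting signs are absorbed into $P$. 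Since only finitely many terms of $R_X$ act nonzero, $Z_{\TS}$ is a finite sum in $U_{\Q}^{\ot N}$.
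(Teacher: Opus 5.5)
Your proposal is correct and is essentially the paper's argument. Both use the standard Habiro--Ohtsuki bead construction of the universal invariant, with Lemma~\ref{lemma: braiding universal inv form on XotX} replacing the universal $R$-matrix, and multiply the beads along each of the $N$ strands: antipoded where needed, with pivotal factors $K_{2\rho}^{\pm1}$ at extrema. The only real difference is that the paper first isotopes $\TS$ so that all crossings are oriented upward, so only $c_{X,X}^{\pm1}$ occurs; you instead handle crossings involving $X^*$ by naturality of the braiding under the module isomorphism $\iota_X$, which is equally valid. A minor bookkeeping point: the Koszul signs that depend on the parity of the beads belong in $Z_{\TS}$ rather than in $P$, but this does not affect the argument.
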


We can be more explicit about the isomorphism $P$ and say that 
$$
P=(\id_{X^*}\ot \tau_{X,X^*}\ot \id_X)^{\ot g}\ot \id_{X^*\ot X}^{\ot (s-1)}
$$ 
but we won't need this. As an example, if $g(\S)=1$ and $s=1$ (so $L$ is a knot) the
formula of the lemma states that 
$$
F(\TS,X)=\sum_{i,j}e'_i\ot e'_j\ot Z_1\cdot e_i\ot Z_2\cdot e_j
$$
if $Z_{\TS}=\sum Z_1\ot Z_2$ and $(e_i)$ is any basis of $X$ with dual basis $(e'_i)$.

\begin{proof}
This follows from the usual proof of existence of the universal invariant thanks to Lemma 
\ref{lemma: braiding universal inv form on XotX}; see also~\cite{Ohtsuki:univ}.
Indeed, after writing the braiding $c_{X,X}$ (and its inverse) in the form of Lemma 
\ref{lemma: braiding universal inv form on XotX}, $Z_{\TS}$ is 
obtained simply by following each of the $N$ strands of $\TS$ and multiplying all
the elements encountered (assuming the tangle has been isotoped so that all crossings
are oriented upwards). All coefficients are in $\Q(q)$ since this is the case for the
$R$-matrix.
\end{proof}

\subsection{Proof of Theorem~\ref{thm.1.links} for connected $\S$}

We now have all the ingredients to prove Theorem~\ref{thm.1.links} for connected $\S$.
We suppose first that $\S$ is connected as in the previous discussion. From formula 
\eqref{eq: FTX from ZT} and the algebraic bound of Proposition \ref{prop.algb} (applied 
once for every component of $\TS$), it follows that when $F(\TS,X)$ is expanded in
the standard basis of $(X^*\ot X^*\ot X\ot X)^{\ot g}\ot (X^*\ot X)^{s-1}$, all
coefficients that appear will be polynomials in $q^{-2a}$ (with $\Q(q)$-coefficients)
of degree $\leq N\cdot \varphi^+$. Now, since $\iota_X:X^*\to X$ is represented in
the standard bases by a (diagonal) matrix with coefficients in $q^{a\varphi^+}\Q(q)$,
it follows from (\ref{eq: Fbar from F and j's}) 
(which involves $N$ $\iota_X$'s) that $\FF(\TS,X)$ has coefficients which are Laurent
polynomials in $q^{a}$ with powers lying in the interval $[-N\varphi^+, N\varphi^+]$. 
Finally, since the left evaluation $\lev_V$ is $0$ or $1$ in the standard basis (in
particular it involves no powers of $q^a$), if follows from (\ref{eq: LG from FbarTX})
that $F(L_o,V)$ on any standard basis vector, is a multiple of that vector by a
polynomial in $q^a$ with powers in $[-N\varphi^+, N\varphi^+]$. The latter polynomial
is the invariant $J^{\gg}_{K,V}$, 
therefore proving the theorem in the case of connected $\S$.
\qed

\begin{remark}
The above also proves that the invariants $J_{K,V}^{\gg}$ are Laurent polynomials in
$q^{2a}$ up to an overall power of $q^a$. This overall power is
$q^{a\varphi^+(2g(L)+s-1)}$. In particular, the 0-framed knot invariants are always
Laurent polynomials in $q^{2a}$. Compare also with Remark~\ref{rem.int}.
\end{remark}

\subsection{The case of disconnected $\S$} 

Suppose now that $\S$ has $k$ components. Then it can be obtained from a bottom tangle 
$\TS$ by attaching $k$ disks on top as indicated in Figure
\ref{fig:Seifert surface disconnected case}. To get the open link $L_o$ we cut open
the rightmost disk, as indicated in the same figure.

\begin{figure}[h]
    \centering
    \includegraphics[width=10cm]{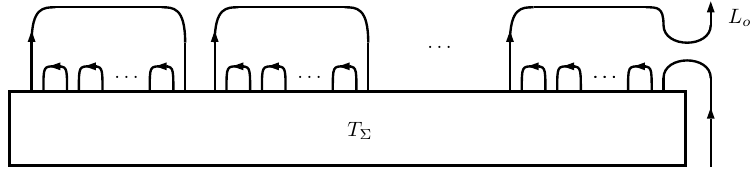}
\caption{This surface is obtained from $k$ disjoint disks by attaching the tangle $\TS$ 
  from the bottom. Note that only one of the disks (the rightmost in the above figure)
  has to be cut open.}
\label{fig:Seifert surface disconnected case}
\end{figure}

Now comes the crucial observation: each of the $k-1$ disks (those that were not cut
open) contributes a right evaluation $\rev_V$ to the formulas. This right evaluation
has coefficients in $q^{-a\varphi^+}\Q(q)$ in the standard basis. The Reshetikhin--Turaev
invariant $\FF(\TS,X)$ has again coefficients that are polynomials in $q^{a}$ with
powers lying in $[-N\varphi^+,N\varphi^+]$ where $N$ is the number of components of
$\TS$. Since left evaluations contribute no $q^a$'s, it follows (as in the previous
proof) that the resulting invariant $J_{L,V(\nn,a)}$ has powers of $q^a$ 
lying in 
$$
[-N\varphi^+-(k-1)\varphi^+,N\varphi^+-(k-1)\varphi^+].
$$
Note that the latter fact is independent of the color $\nn$ chosen, hence the same
applies as well to the invariant $J_{L,V(\nn',a)}$ where $\nn'$ is such that
$\o(V(\nn,a))\cong V(\nn',-a+l)$ ($l\in \Z$). But by Proposition
\ref{prop: symmetry property}, the invariants $J_{L,V(\nn,a)}$ 
and $J_{L,V(\nn',a)}$ are related by $q^a\mapsto q^{-a}q^l$. It follows that the
powers of $q^a$ for both must lie in the (smaller) interval 
$$
[-N\varphi^++(k-1)\varphi^+,N\varphi^+-(k-1)\varphi^+].
$$
But $N-(k-1)=1-\chi(\S)$ which proves that $\deg_{q^{2a}}\leq (1-\chi(\S))\varphi^+$ 
as desired.
\qed


\section{The MMR Conjecture for Lie superalgebras}
\label{sub.MMR}

In this section we sketch the argument of the MMR Conjecture from simple Lie 
algebras to Lie superalgebras.
The proof of the MMR conjecture (given in~\cite{B-NG} for simple Lie algebras and
extended mutatis-mutandis for Lie superalgebras) implies that

\be
J^\gg_{K,\Vnna}(t,q) =
\frac{\prod_{\a \in \Deodd} \Delta(e^{h (\lambda,\a)})}{
  \prod_{\a \in \Deev} \Delta(e^{h (\lambda,\a)})} + O(q-1)
\ee
where $\lambda$ is the weight $(\nn,a)$, $q=e^{h/2}$ (see~\cite[Sec.1.2]{Geer:multi1})
and $t=q^{2a}=e^{h a}$. 
For every $\a \in \Phi^+$, we have $(\lambda, \a) = \ve a + L_\a(\nn)$ 
where $\ve=0$ (resp., $1$) if $\a$ is even (resp., odd) and $L_\a(\nn)$ is a 
linear form on $\nn$. It follows that for every fixed $\nn$, we have
\be
J^{\gg}_{K,\Vnna}(t,1) = \Delta_K(t)^{\phip} \,. 
\ee


\bibliographystyle{hamsalpha}
\bibliography{biblio}
\end{document}